\documentclass[11pt]{amsart}

\usepackage{amsmath,amssymb,amsfonts,amsthm,mathrsfs}
\usepackage{hyperref}
\usepackage{verbatim}

\hypersetup{
  colorlinks=true,
  linkcolor=blue,
  citecolor=blue,
  urlcolor=blue
}

\newtheorem{theorem}{Theorem}[section]
\newtheorem{lemma}[theorem]{Lemma}
\newtheorem{proposition}[theorem]{Proposition}
\newtheorem{corollary}[theorem]{Corollary}

\theoremstyle{definition}
\newtheorem{definition}[theorem]{Definition}

\newtheorem{remark}[theorem]{Remark}

\newcommand{\R}{\mathbb{R}}

\title[Degeneracy Set Comparison and Free Boundary Estimates]{Degeneracy Set Comparison Principle and Free Boundary Estimates for H\'enon-type Infinity Laplace equations}

\author{Yantian Chen}
\address[Yantian Chen]{School of Mathematics, Southeast University,
Nanjing 211189, P.R. China}
\email{213230018@seu.edu.cn}

\author{Feida Jiang$^*$}
\address[Feida Jiang]{School of Mathematics and Shing-Tung Yau Center of Southeast University,
Southeast University, Nanjing 211189, P.R. China;
Shanghai Institute for Mathematics and Interdisciplinary Sciences,
Shanghai 200433, P.R. China}
\email{jiangfeida@seu.edu.cn}

\thanks{$^*$Corresponding author.}

\subjclass[2020]{Primary 35J94; Secondary 35B65, 35R35}

\keywords{Infinity Laplacian, H\'enon-type equations, degenerate weights,
degeneracy set comparison, strong absorption, free boundary}

\begin{document}

\begin{abstract}
In this work, we study nonnegative viscosity solutions to H\'enon-type equations driven by the infinity-Laplacian with a degenerate weight, strong absorption and an additional source term. We focus on free boundary points lying in the degeneracy set of the weight. We prove the degeneracy set comparison principle, which yields uniqueness within each slice determined by the value on the degeneracy set of the weight, although global uniqueness is not available in general. We establish sharp improved regularity estimates near free boundary points, with the intrinsic growth rate $r^{\frac{4+\alpha}{3-m}}$. Finally, we obtain a matching non-degeneracy estimate at free boundary points; the property holds in the inhomogeneous case with suitable condition. Consequently, solutions detach from their zero phase at this rate.
\end{abstract}

\maketitle

\section{Introduction}

This current work is devoted to the study of the inhomogeneous H\'enon-type equation driven by infinity Laplace operator featuring a degenerate weight, strong absorption, and an additional source term. More precisely, we consider the associated Dirichlet problem:
\begin{equation}
\label{eq:general-dirichlet}
\begin{cases}
\Delta_\infty u=f(|x|,u)+h(x) & \text{in }B_1,\\
u=g & \text{on }\partial B_1.    \end{cases}
\end{equation}
where $B_1 \subset \R^n$, $n \geq 2$, denotes the unit $n$-dimensional ball centered at the origin, $g\in C(\partial B_1)$ is nonnegative, and $h\in C(B_1)\cap L^\infty(B_1)$, $h\le0$.

Following the H\'enon-type framework in \cite{BezerraDaSilvaNascimentoSa2024}, we impose a scaling condition on the absorption term $f$. More precisely, for all $(x, t) \in B_1 \times \mathfrak{I}$, $r, s \in (0, 1)$, where $\mathfrak{I} \subset \mathbb{R}$ is an interval, we assume that there exist a universal constant $\mathrm{c}_n > 0$ and an $f_0 \in L^\infty(B_1)$ such that
\begin{equation*}
|f(r|x|, s t)| \leq \mathrm{c}_n r^{\alpha} s^m \|f_0\|_{L^\infty(B_1)} \quad \text{for} \quad 0 < m < 3 \quad  \text{and} \quad  \alpha \in \left[0, \infty\right).
\end{equation*}

Throughout the paper, we will focus on the H\'enon-type as the special case 
\[
f(|x|,t)=|x|^{\alpha}t_+^m.
\]

Therefore, the principal prototype we focused on in the sequel is
\begin{equation}
\label{eq:main}
    \Delta_\infty u=|x|^\alpha u_+^m+h(x)
    \quad\text{in }B_1.
\end{equation}

The problem above involves the infinity Laplace operator, which can be written as
\[
\Delta_{\infty}u
:=
\sum_{i,j=1}^n\partial_i u\,\partial_{ij}u\,\partial_j u
=
(Du)^T D^2u\,Du.
\]

The study of the infinity Laplacian originates from Aronsson's work on absolutely minimizing Lipschitz extensions (AMLE) \cite{Aronsson1967}. The basic question asks for a Lipschitz function $u$ with $u=g$ on $\partial\Omega$ that minimizes the maximal slope. A function is absolutely minimizing if for every $V\Subset\Omega$ and every Lipschitz function $v$ satisfying $v=u$ on $\partial V$, $\|Du\|_{L^{\infty}(V)}\leq \|Dv\|_{L^{\infty}(V)}$. Since minimal Lipschitz extensions need not be unique, Aronsson introduced the stronger notion of an absolutely minimizing Lipschitz extension. For smooth functions, the corresponding Euler equation is $\Delta_{\infty}u=0$. However, since the operator is highly degenerate, smooth solutions cannot in general be expected. Jensen later proved that absolutely minimizing Lipschitz extensions are precisely the solutions of $\Delta_{\infty}u=0$ in the viscosity sense and established the uniqueness for the corresponding Dirichlet problem \cite{Jensen1993}.

Although the existence and uniqueness for the homogeneous Dirichlet problem of $\Delta_{\infty}u=0$ is well established, the regularity of the solution is much more delicate and subtle. 

The classical Aronsson example
\[
u(x,y)=|x|^{4/3}-|y|^{4/3}
\]
suggests that $C^{1,1/3}$ is the expected optimal regularity. In dimension two, Evans and Savin proved that infinity harmonic functions are of class $C^{1,\alpha}$, for some $\alpha>0$, see \cite{EvansSavin2008}. In arbitrary dimensions, Evans and Smart obtained everywhere differentiability for infinity harmonic functions \cite{EvansSmart2011}.

Very recently, Xu proved a variety of new structural and regularity results of infinity-harmonic functions in two dimensions in a preprint \cite{Xu2026}, including that infinity-harmonic functions in domains of $\mathbb{R}^2$ belong to $C_{\mathrm{loc}}^{1,1/3}$.

Beyond homogeneous setting, the inhomogeneous infinity Laplace equation has also received much attention, which is more delicate. Lu and Wang studied the Dirichlet problem and proved the existence and uniqueness of viscosity solutions of that provided the inhomogeneous term $f$ has a strict fixed sign, i.e. either $\inf f > 0$ or else $\sup f < 0$ in \cite{LuWang2008}. They also showed non-uniqueness example when the forcing term is allowed to change sign. It is also known that viscosity solutions of the infinity Laplacian equation with bounded continuous inhomogeneous term are locally Lipschitz continuous. Furthermore, Lindgren showed that the solutions are differentiable everywhere provided $f\in C^{1}$, see \cite{Lindgren2014}. In \cite{KochZhangZhou2019}, Koch, Zhang and Zhou established sharp Sobolev regularity estimates for $|Du|^\alpha$, given that $f\in BV_{\mathrm{loc}}(\Omega)\cap C(\Omega)$ and $|f|>0$.

Despite the advances obtained, the regularity theory for inhomogeneous infinity Laplace equations remains far from complete. In recent years, a particularly relevant class of inhomogeneous problem regarding reaction-diffusion models with strong absorption has attracted considerable attention. Such problems usually present a phenomenon of plateaus, or dead cores, where a nonnegative solution vanishes identically. Remarkably, the additional absorption structure may lead to improved regularity and non-degeneracy properties as viscosity solutions detach from their zero sets.

In this direction, Ara\'ujo, Leit\~ao and Teixeira~\cite{AraujoLeitaoTeixeira2016} studied the reaction--diffusion model
\[
\Delta_\infty u = \lambda (u_+)^\gamma, \qquad 0 \leq \gamma < 3,
\]
initially with a constant Thiele modulus $\lambda > 0$. They established existence and uniqueness of nonnegative viscosity solutions and obtained sharp geometric regularity estimates near the free boundary $\partial\{u>0\}$. More precisely, solutions detach from their dead-core regions with the characteristic rate
\[
\mathrm{dist}\left(x,\partial\{u>0\}\right)^{\frac{4}{3-\gamma}},
\]
together with a matching non-degeneracy estimate. Their results reveal that the strong absorption structure may produce an improved regularity near the free boundary, compared with $C^{1,\alpha}$ for general inhomogeneous infinity Laplace equations. They also pointed out that The regularity estimate extends to bounded spatially dependent weight function $\lambda=\lambda(x)$.

The model was subsequently extended by Lin and Liu in \cite{LinLiu2022}, who considered the inhomogeneous problem
\[
\Delta_\infty u - \lambda(x)(u_+)^\gamma = f(x) \quad \text{in } \Omega,
\]
where $\lambda$ is a spatially dependent weight function, $\inf_{\Omega} \lambda>0$ and $f\leq0$. They proved existence and uniqueness of non-negative viscosity solutions and, under additional assumptions on the source term $f$, recovered the $C^{\frac{4}{3-\gamma}}$ regularity across the free boundary as well as the corresponding non-degeneracy property. They also established a stability result showing that solutions converge uniformly to the solution of the homogeneous strong-absorption problem as the inhomogeneous term tends uniformly to zero under the assumption $\inf_{\Omega}\lambda>0$.

More recently, Bezerra J\'unior, da Silva, Nascimento and S\'a \cite{BezerraDaSilvaNascimentoSa2024} investigated the effect of a spatially degenerate H\'enon-type weight by considering, as a prototype,
\[
\Delta_\infty u = |x|^\alpha u_+^m \quad \text{in } B_1, \qquad 0 \leq m < 3.
\]
The weight function $|x|^\alpha$ introduces an additional degeneracy at the origin and changes the natural growth scale of the problem. They obtained the sharp exponent
\[
\beta = \frac{4+\alpha}{3-m},
\]
together with improved regularity estimates of order $\beta$. Thus, the vanishing of the H\'enon weight at the origin changes the natural free boundary scale from $r^{\frac{4}{3-m}}$ to $r^{\frac{4+\alpha}{3-m}}$. 

In a broader framework, Wang and Jiang \cite{WangJiang2026} studied strong-absorption problems governed by degenerate or singular fully nonlinear elliptic operators. Their results include improved regularity and non-degeneracy along free boundaries, measure estimates, Liouville-type properties and blow-up analysis, and also extend the study of Hardy--H\'enon-type equations to this degenerate or singular fully nonlinear setting.

For the non-degeneracy analysis of the H\'enon-type problem, they further restricted their attention to the class of nonnegative \emph{limiting solutions} \cite{WangJiang2026}. More precisely, a viscosity solution $u$ is called a limiting
solution if it can be obtained as a locally uniform limit of nonnegative viscosity solutions $u_j$ to the penalized problems
\begin{equation*}
    |Du_j|^p F(D^2u_j,x)
    =
    f(|x|,u_j)+\frac{1}{j}.
\end{equation*}

The idea of limiting solutions is already implicit in earlier \cite{BezerraDaSilvaNascimentoSa2024}; Wang and Jiang made this viewpoint explicit by formalizing the class of limiting solutions and using it systematically in their non-degeneracy analysis. This provides a natural approximation framework for studying non-degeneracy at critical points. 

These developments show that the improved free-boundary behavior is not restricted to the standard infinity Laplacian, but rather reflects a more general interaction between degenerate diffusion and strong absorption.

Nevertheless, some questions remain to be answered. The preceding works leave a gap between the inhomogeneous strong absorption theory and the degenerate H\'enon-type setting. Lin and Liu \cite{LinLiu2022} consider a source term but assume that the weight is strictly positive, whereas Bezerra J\'unior, da Silva, Nascimento and S\'a \cite{BezerraDaSilvaNascimentoSa2024} consider a degenerate weight but focus on the homogeneous case. This raises the question of how the degeneracy of the H\'enon weight and the additional source term would influence the free boundary growth and the whole framework structure. To the best of our knowledge, currently general global uniqueness theory for H\'enon-type Dirichlet problem is not available. Indeed, the available comparison and uniqueness results require the weight to remain strictly positive, bringing us genuine obstacles. Moreover, in the discussion of non-degeneracy analysis, the existing literature relies on the restriction of limiting solutions. We are also curious whether limiting solutions are intrinsic to the non-degeneracy phenomenon in the degenerate weight case.

Our main results are as follows.

\begin{theorem}[\textbf{Degeneracy-set Comparison Principle}]
\label{thm:de-set-comparison}
Suppose $\Omega$ is a bounded domain in $\mathbb{R}^n$. Let $h\in C(\Omega)\cap L^\infty(\Omega)$ with $h\le0$, $\lambda(x)\in C(\Omega)$ with $\lambda(x)\ge0$ and $0<m<3$. Let $u,v \in C^0(\overline{\Omega})$ be non-negative functions such that $u$ is a viscosity subsolution and $v$ is a viscosity supersolution of 
\[
\Delta_\infty w-\lambda(x)w_+^m=h(x) \qquad \text{in } \Omega,
\]
that is 
\[
\Delta_\infty u-\lambda(x)u_+^m\ge h(x),
\]
and
\[
\Delta_\infty v-\lambda(x)v_+^m\le h(x)
\]
in the viscosity sense. Denote the degeneracy set of weight function $F:=\{x\in\Omega:\lambda(x)=0\}$, if $u\le v$ on $\partial\Omega\cup F$, then $u\le v$ in $\Omega$.
\end{theorem}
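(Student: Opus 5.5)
We argue by contradiction. Suppose $M:=\sup_{\Omega}(u-v)>0$. Since $u,v\in C^0(\overline{\Omega})$ and $u\le v$ on $\partial\Omega\cup F$, the set
\[
K:=\{x\in\overline{\Omega}: u(x)-v(x)=M\}
\]
is a nonempty compact subset of $\Omega\setminus F$. By continuity of $\lambda$, there is an open set $U$ with $K\subset U\Subset\Omega\setminus F$, $\lambda\ge\lambda_0>0$ on $\overline U$, and $\max_{\partial U}(u-v)<M$. The problem is thereby reduced to a comparison on $U$, where the weight is strictly positive.

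On $U$ we use the strict monotonicity of the absorption. At points where $u>v\ge0$ we have $\lambda(x)\big(u_+^m-v_+^m\big)>0$, and this gap is quantitatively positive near $K$. Indeed, there $u\ge M+v\ge M$, so $u^m-v^m\ge c(M,m,\|u\|_\infty)>0$. This holds even for $0<m<1$, by the mean value theorem on $[v,u]$ with $u\ge M$.

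We run the standard doubling-of-variables argument with $\Phi(x,y)=u(x)-v(y)-\frac{j}{4}|x-y|^4$ on $\overline U\times\overline U$. The quartic penalization is the usual choice for the infinity Laplacian, since it makes the test-function Hessians controllable. Let $(x_j,y_j)$ be maximizers. Because $\max_{\partial U}(u-v)<M$, they lie in the interior for $j$ large, with $x_j,y_j\to K$, $j|x_j-y_j|^4\to0$, and $u(x_j)-v(y_j)\to M$.

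The Crandall--Ishii lemma then gives $(p_j,X_j)\in\overline J^{2,+}u(x_j)$ and $(p_j,Y_j)\in\overline J^{2,-}v(y_j)$ with $X_j\le Y_j$ and $p_j=j|x_j-y_j|^2(x_j-y_j)$. Subtracting the two viscosity inequalities yields
\[
\lambda(x_j)u(x_j)^m-\lambda(y_j)v(y_j)^m\le \langle X_jp_j,p_j\rangle-\langle Y_jp_j,p_j\rangle+h(y_j)-h(x_j)\le h(y_j)-h(x_j).
\]
The right-hand side tends to $0$ by uniform continuity of $h$ on $\overline U$. The left-hand side is bounded below by $\lambda_0\big(u(x_j)^m-v(y_j)^m\big)-|\lambda(x_j)-\lambda(y_j)|\,\|v\|_\infty^m$, which tends to at least $\lambda_0 c>0$. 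This is a contradiction, so $u\le v$ in $\Omega$.

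One subtlety concerns the case $p_j=0$, i.e.\ $x_j=y_j$. Then the test function $\frac j4|x-y|^4$ has vanishing gradient and Hessian in $x$ at the contact point. The viscosity inequalities must therefore be read with $\Delta_\infty\phi=0$, which still gives $\lambda u^m\le -h$ and $\lambda v^m\ge -h$ at the same point. This yields the same contradiction, because the strict gap comes from the zero-order term and not from the operator.

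The main obstacle is ensuring that the strict positivity of $\lambda$ is really available where the comparison is performed. This is why the localization to $U$, away from $F$ and from $\partial\Omega$, is done first, using compactness of the maximum set $K$. The remaining step that needs care is the uniform continuity of $h$ and $\lambda$ on $\overline U$, which holds because $\overline U\Subset\Omega$ is compact. No sign condition on $h$ is actually needed for this argument beyond boundedness and continuity.
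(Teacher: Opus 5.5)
Your argument is correct. It differs from the paper's in how the comparison on the region of positive weight is obtained.

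\textbf{The paper's route.} The paper localizes by uniform continuity. Given $\varepsilon>0$, it finds $\delta$ with $u-v\le\varepsilon$ on the $\delta$-neighbourhood of $\partial\Omega\cup F$. It notes that $v+\varepsilon$ is still a supersolution, because $t\mapsto\lambda t^m$ is nondecreasing. It then applies the Lin--Liu comparison principle (Theorem~\ref{thm:Liu-comparison}) as a black box on $\Omega_{\delta/2}=\{\operatorname{dist}(\cdot,\partial\Omega\cup F)>\delta/2\}$, where $\inf\lambda>0$, and lets $\varepsilon\to0$.

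\textbf{Your route.} You localize around the compact maximum set $K$ instead. No $\varepsilon$-shift is needed, since $\max_{\partial U}(u-v)<M$ holds automatically. You then reprove the comparison on $U$ directly, by doubling variables and the theorem of sums. The contradiction comes from the quantitative gap $\lambda_0\bigl(u(x_j)^m-v(y_j)^m\bigr)\ge\lambda_0\, c(M,m,\|u\|_\infty)$. This gap is available because $u(x_j)-v(y_j)\ge M$ holds exactly at the penalized maximizers.

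\textbf{What each buys.} The paper's proof is shorter and modular, but it inherits the hypotheses of the cited theorem, in particular $h\le0$ and $v\ge0$ on the boundary. Your proof is self-contained and shows that strict monotonicity of the zeroth-order term at a positive interior maximum is the only mechanism at work. It also shows that $h\le0$ is superfluous. You use only continuity of $h$ and $\lambda$, boundedness of $u$, and nonnegativity of $v$; the last gives $u(x_j)\ge M>0$, hence $u_+=u$ there.

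Your separate treatment of the case $p_j=0$ is harmless but unnecessary. The operator here is the unnormalized $\langle Xp,p\rangle$, which is continuous, so that case is already covered by your general inequality.
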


\begin{theorem}[\textbf{Improved regularity at the degenerate point on the free boundary}]
\label{thm:regularity}
Let \(u\in C(B_1)\) be a nonnegative viscosity solution of
\begin{equation}\label{eq:source-model}
    \Delta_\infty u=|x|^\alpha u^m+h(x)
    \quad\text{in }B_1,
\end{equation}
where
$
\alpha>0, 
0<m<3,
h\in C(B_1)\cap L^\infty(B_1)$, and
$
h\le0.
$

Assume that $0\in\partial \{u>0\}$.
Then there exists a constant \(C>0\), depending only on $\alpha$, $m$, $\|u\|_{L^{\infty}(B_1)}$ and the dimension, such that
\begin{equation}
\label{eq:regularity}
    \sup_{x\in B_r(0)}u(x)
    \le
    C\cdot r^{\frac{4+\alpha}{3-m}}
\end{equation}
for every \(0<r<1/3\). In particular, \(u\) is differentiable at the origin and $Du(0)=0$.
\end{theorem}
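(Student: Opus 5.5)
The approach I would take is the contradiction and blow-up scheme of Teixeira and of Ara\'ujo--Leit\~ao--Teixeira, adapted to the degenerate weight. It suffices to prove the dyadic version: there is $C_0$ such that $S_j:=\sup_{B_{2^{-j}}}u\le C_0\,2^{-j\beta}$ for all $j\ge 2$, with $\beta=\frac{4+\alpha}{3-m}$. The bound for general $0<r<1/3$ then follows by comparing $r$ with the neighbouring dyadic radius. The differentiability statement is immediate: $0\le u(x)\le C|x|^\beta$ with $\beta>4/3>1$ gives $u(0)=0$ and $Du(0)=0$.

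Suppose the dyadic bound fails. Then for every $k$ there is a solution $u_k$ of \eqref{eq:source-model}, with $\|u_k\|_\infty$ controlled and $0\in\partial\{u_k>0\}$, and an index $j_k$ with $S_{j_k}(u_k)>k\,2^{-j_k\beta}$. As in Teixeira's argument, I would choose $j_k$ maximal among indices where the ratio $S_j/2^{-j\beta}$ has not yet doubled. Since $u_k$ is bounded, $j_k\to\infty$ and $r_k:=2^{-j_k}\to0$. I then rescale:
\[
v_k(y):=\frac{u_k(r_k y)}{S_{j_k}(u_k)},\qquad y\in B_{1/r_k}.
\]
These satisfy $v_k\ge0$, $v_k(0)=0$, $\sup_{B_1}v_k=1$, and the choice of $j_k$ gives the growth control $\sup_{B_{2^{i}}}v_k\le 2^{i\beta+1}$ for $2^{i}\le 1/r_k$.

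The rescaled equation is
\[
\Delta_\infty v_k=\varepsilon_k|y|^\alpha v_k^m+\frac{r_k^4}{S_{j_k}^3}\,h(r_ky),
\qquad
\varepsilon_k=r_k^{4+\alpha}S_{j_k}^{m-3}\le k^{m-3}\to0 .
\]
The coefficient in front of $h$ need not be small, because $3\beta>4$. This is where the sign condition $h\le0$ must be used. Dropping that term, $v_k$ is a viscosity supersolution of $\Delta_\infty w\le\varepsilon_k|y|^\alpha w^m$ on $B_2$, with right-hand side at most $\varepsilon_k 2^{\alpha}(2^{\beta+1})^m\to0$. I therefore plan to pass to a limit using only one-sided information:
\begin{itemize}
\item lower semicontinuous limits of such supersolutions are infinity superharmonic, i.e. $\Delta_\infty v_\infty\le0$ in $B_2$;
\item the limit still satisfies $v_\infty\ge0$ and $v_\infty(0)=0$;
\item the normalization must not be lost, so that $\sup_{B_1}v_\infty=1$.
\end{itemize}
With these in hand, the Harnack inequality for nonnegative infinity superharmonic functions (via comparison with cones from below) forces $v_\infty\equiv0$ on $B_{3/2}$, which is the desired contradiction.

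The main obstacle is keeping the normalization $\sup_{B_1}v_k=1$ in the limit. A supersolution inequality gives lower semicontinuity but no equicontinuity from above. I would try first to derive a quantitative Harnack inequality directly for $v_k$, avoiding a limit altogether. Approximate comparison with cones from below holds for $\Delta_\infty w\le\delta$ with $\delta$ small: perturb the cone by a small quadratic correction $\pm\delta'|y|^2$ so that it becomes a strict subsolution. This should yield
\[
\sup_{B_1}v_k\le C\big(v_k(0)+\delta_k^{1/3}\big)\to0,
\]
which contradicts $\sup_{B_1}v_k=1$ without any compactness. The remaining point is to make the constants depend only on $\alpha$, $m$, $n$ and $\|u\|_\infty$. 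The quadratic correction and the resulting $\delta^{1/3}$ scaling are the step I would check most carefully.
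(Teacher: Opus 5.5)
Your proposal is correct and is essentially the paper's argument. You rescale at a scale where the bound fails, discard the rescaled source using $h\le0$ (you are right that its coefficient need not be small, since $3\beta>4$), and contradict $\sup v_k=1$, $v_k(0)=0$ with a Harnack inequality in which only the positive part of the right-hand side enters. The paper simply quotes this inequality as Theorem~\ref{thm:harnack-source}, so your compactness detour is unnecessary and your perturbed-cone derivation just reproves it.

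The only structural difference is in how the contradiction is set up. The paper treats a single $u$ through the one-step triadic alternative $S_{j+1}\le\max\{C_0 3^{-\beta(j+1)},3^{-\beta}S_j\}$. This gives $v_k<3^\beta$ on all of $B_1$ for free and matches the $B_{1/3}\subset B_1$ geometry of that Harnack inequality. You instead use a family of solutions with a growth-control selection on dyadic scales.
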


\begin{theorem}[\textbf{Non-degeneracy at the degenerate free-boundary point of homogeneous model}]
\label{thm:non-de-homo}
Let $\alpha>0$, $m\in (0,3)$, $u\in C^0(B_1)$ be a nonnegative viscosity solution of
\[
\Delta_\infty u=|x|^\alpha u_+^m
\qquad\text{in }B_1.
\]

Assume that $0\in\partial\{u>0\}$.
Then, there exists $r^*>0$ such that for all $r\in(0,r^*)$ such that
$B_r(0)\subset B_1(0)$, we have
\[
\sup_{\partial B_r(0)} u(x)
\ge
\left(
\frac{(3-m)^4}
{(4+\alpha)^3(1+\alpha+m)}
\right)^{\frac{1}{3-m}}
\, r^{\frac{4+\alpha}{3-m}}.
\]
\end{theorem}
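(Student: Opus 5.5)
The plan is to compare $u$ with explicit radial solutions that vanish at the origin. The comparison is carried out through Theorem~\ref{thm:de-set-comparison}, applied with $\lambda(x)=|x|^\alpha$ and $h\equiv0$. In this setting the degeneracy set is $F=\{0\}$, and the hypothesis $0\in\partial\{u>0\}$ gives $u(0)=0$.

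\textbf{Step 1: the homogeneous radial solution.} Set $\beta=\frac{4+\alpha}{3-m}$ and $\psi_c(x)=c|x|^\beta$. A direct computation gives
\[
\Delta_\infty\psi_c=c^3\beta^3(\beta-1)|x|^{3\beta-4},
\qquad
|x|^\alpha\psi_c^m=c^m|x|^{\alpha+m\beta}.
\]
The exponents agree because $\beta(3-m)=4+\alpha$. Since $\beta-1=\frac{1+\alpha+m}{3-m}$, the function $\psi_c$ is an exact solution precisely when
\[
c=c_*:=\left(\frac{(3-m)^4}{(4+\alpha)^3(1+\alpha+m)}\right)^{\frac1{3-m}},
\]
which is exactly the constant in the statement. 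Because $\beta>1$, there is no singularity at the origin and $\psi_{c_*}\in C^1$ there.

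\textbf{Step 2: radial dead-core solutions.} For $\rho\in(0,r)$ I will construct a radial $\phi_\rho(x)=\varphi_\rho(|x|)$ with three properties: $\varphi_\rho\equiv0$ on $[0,\rho]$; $\varphi_\rho$ solves the radial ODE $(\varphi')^2\varphi''=s^\alpha\varphi^m$ on $(\rho,r]$ with $\varphi(\rho)=\varphi'(\rho)=0$; and $\varphi_\rho$ is the maximal nontrivial solution of that problem. Such a solution exists because $m<3$ makes the nonlinearity non-Lipschitz at zero, so branching off zero is possible. One can build it by shooting, or by the ansatz $\varphi\approx k(s-\rho)^{4/(3-m)}$ near $\rho$ followed by continuation.

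I then need to check that $\phi_\rho$ is a viscosity solution, and in particular a supersolution, of $\Delta_\infty w=|x|^\alpha w_+^m$ in $B_r$. Across $|x|=\rho$ the function is $C^1$ with zero gradient, so the test-function argument there is routine. On the punctured region one uses the radial form of $\Delta_\infty$, which is exactly $(\varphi')^2\varphi''$.

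Next, $M(\rho):=\varphi_\rho(r)$ should be continuous and decreasing in $\rho$, with $M(\rho)\to0$ as $\rho\to r$ and $M(\rho)\to c_*r^\beta$ as $\rho\to0^+$. The last limit holds because the maximal solution branching at $0$ is $\psi_{c_*}$. Establishing continuity and monotonicity of $M$, through continuous dependence for this ODE and an ODE comparison, is the step I expect to be the main obstacle, since uniqueness fails at the branching point. I would handle it by working with the maximal solution and using the scaling $s\mapsto ts$ to compare branches.

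\textbf{Step 3: contradiction.} Fix $r<r^*$ and suppose $\sup_{\partial B_r}u<c_*r^\beta$. By Step 2 and the intermediate value theorem there is $\rho\in(0,r)$ with $\varphi_\rho(r)=\sup_{\partial B_r}u$. Thus $u\le\phi_\rho$ on $\partial B_r$, and $u(0)=0=\phi_\rho(0)$, so the inequality also holds on $F$.

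Since $u$ is a subsolution and $\phi_\rho$ a supersolution, Theorem~\ref{thm:de-set-comparison} in $\Omega=B_r$ gives $u\le\phi_\rho$ in $B_r$. Hence $u\equiv0$ on $B_\rho$, which contradicts $0\in\partial\{u>0\}$. Therefore $\sup_{\partial B_r}u\ge c_*r^{\frac{4+\alpha}{3-m}}$ for every such $r$. The restriction $r<r^*$ is needed only to ensure $B_r\subset B_1$ and that $u$ is continuous up to $\partial B_r$.
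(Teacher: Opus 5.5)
Your proof is correct, and its skeleton coincides with the paper's Proof II. Both use a radial barrier vanishing on a small ball $B_\rho$, apply Theorem~\ref{thm:de-set-comparison} on $B_r$ with $F=\{0\}$ and $u(0)=0$, derive a contradiction with $0\in\partial\{u>0\}$, and then let $\rho\to0$.

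The difference is the choice of barrier. The paper does not use exact dead-core solutions. It takes the translated self-similar profile
\[
\Phi_\rho(x)=c_*\bigl(|x|-\rho\bigr)_+^{\beta}.
\]
Your Step 1 computation gives $\Delta_\infty\Phi_\rho=(|x|-\rho)_+^{\alpha}\,\Phi_\rho^m\le|x|^\alpha\Phi_\rho^m$, so $\Phi_\rho$ is already a supersolution. At $|x|=\rho$ it is $C^1$ with zero gradient, since $\beta>1$. Comparison then gives $\sup_{\partial B_r}u\ge c_*(r-\rho)^\beta$ directly. Your Step 2, which you identify as the main obstacle, is therefore unnecessary.

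Even on your route, Step 2 can be trimmed.
\begin{itemize}
\item You only need some $\rho\in(0,r)$ with $\varphi_\rho(r)\ge\sup_{\partial B_r}u$. Continuity of $M$ and the intermediate value theorem are not needed.
\item The limit $M(\rho)\to c_*r^\beta$ is not justified by identifying the maximal solution branching at $0$. Continuous dependence at a branching point is exactly what can fail for this ODE.
\item The limit follows instead from the crossing argument for $((\varphi')^3)'=3s^\alpha\varphi^m$, which gives $c_*(r-\rho)^\beta<M(\rho)<c_*r^\beta$. The upper bound comes from comparing with $\psi_{c_*}$, which is positive with positive slope at $s=\rho$. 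The lower bound comes from comparing the maximal solution $\varphi_\rho$ with the profile of $\Phi_\rho$.
\end{itemize}
Once you have that lower bound, $\Phi_\rho$ can simply replace $\phi_\rho$.

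Your exact barriers buy only a conceptual point: the radial dead-core solutions are the extremal competitors. The final constant $c_*$ is the same either way, and the paper's explicit barrier avoids all the ODE theory.
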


\begin{theorem}[\textbf{Non-degeneracy at the degenerate free-boundary point of inhomogeneous model}]
\label{thm:non-de-inhomo}
Let $\alpha>0$, $m\in (0,3)$ and $u\in C^0(B_1)$ be a nonnegative viscosity solution to \eqref{eq:main}, where $h\in C(B_1)\cap L^\infty(B_1)$ with $h\le0$. Assume $0\in\partial\{u>0\}$ and that $0\in \overline{\{u>0\}\cap\{h=0\}}$.

Fix $\theta\in(0,1)$ and choose $s>0$ such that $s<
\left[
\dfrac{(3-m)^4(1-\theta)^\alpha}
{64(1+m)}
\right]^{1/(3-m)}$ and further assume that for any $x_0\in\{u>0\}\cap\{h=0\}$, there holds
\begin{equation}
\label{cond:non-de-h1}
0\ge h(x)\ge
-s^m
\left[
(1-\theta)^\alpha
-
\frac{64(1+m)}{(3-m)^4}s^{3-m}
\right]
\cdot
|x_0|^{\frac{3\alpha}{3-m}}
|x-x_0|^{\frac{4m}{3-m}}
\tag{H1}
\end{equation}
for every $x\in B_{\theta|x_0|}(x_0)$.

Moreover, for every $\eta\in(0,1)$, there exist $r_\eta\in(0,1)$ and sufficiently small $\mu_0=\mu_0(\alpha, m, \theta, s, \eta)>0$ such that 
\begin{equation}
\label{cond:non-de-h2}
0\ge h(x)\ge
-\mu_0 |x|^{\frac{3\alpha+4m}{3-m}}
\tag{H2}
\end{equation}
for every $x\in B_{r_\eta}$. Then for every $r\in(0,r_\eta)$ we have
\begin{equation*}
\label{non-de-h-sup}
\sup_{\partial B_r}u
\ge
\eta 
\left(
\frac{(3-m)^4}
{(4+\alpha)^3(1+\alpha+m)}
\right)^{\frac1{3-m}}
\cdot
~r^{\frac{4+\alpha}{3-m}}.
\end{equation*}
\end{theorem}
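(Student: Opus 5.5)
We prove Theorem~\ref{thm:non-de-inhomo} by the usual barrier-comparison argument for non-degeneracy, carried out in two stages. First we get a quantitative lower bound at points $x_0\in\{u>0\}\cap\{h=0\}$ near the origin, on balls of radius comparable to $|x_0|$. Then we pass to balls centred at the origin, using the smallness condition \eqref{cond:non-de-h2} and a limiting argument as $x_0\to0$. The hypothesis $0\in\overline{\{u>0\}\cap\{h=0\}}$ is exactly what provides a sequence $x_k\to0$ with $u(x_k)>0$ and $h(x_k)=0$.

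\emph{Local barrier around $x_0$.} Fix such an $x_0$ and set $\rho=\theta|x_0|$. On $B_\rho(x_0)$ we have $|x|\ge(1-\theta)|x_0|$, so $|x|^\alpha\ge (1-\theta)^\alpha|x_0|^\alpha$. We take the radial profile
\[
\phi(x)=\kappa\,|x-x_0|^{\frac{4}{3-m}},
\qquad \kappa = s\,|x_0|^{\frac{\alpha}{3-m}}.
\]
A direct computation gives $\Delta_\infty\phi=\kappa^3 c_m|x-x_0|^{\frac{4m}{3-m}}$ with $c_m=\frac{64(1+m)}{(3-m)^4}$, and $|x|^\alpha\phi^m\ge (1-\theta)^\alpha s^m|x_0|^{\frac{\alpha m+\alpha(3-m)}{3-m}}|x-x_0|^{\frac{4m}{3-m}}$. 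The upper bound on $s$ and condition \eqref{cond:non-de-h1} are tailored so that
\[
\Delta_\infty\phi\le |x|^\alpha\phi^m+h
\]
on $B_\rho(x_0)$; this is exactly the inequality $s^3c_m\le s^m(1-\theta)^\alpha-s^m[(1-\theta)^\alpha-c_ms^{3-m}]$. So $\phi$ is a supersolution there, touching zero at $x_0$. Suppose $u\le\phi$ on $\partial B_\rho(x_0)$. We then compare on $\{u>0\}\cap B_\rho(x_0)$, where $u$ is a genuine solution. The key point is that $x_0\neq0$ and $\rho<|x_0|$, so this ball avoids the degeneracy set $F=\{0\}$ of $|x|^\alpha$. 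Theorem~\ref{thm:de-set-comparison} (with $F\cap B_\rho(x_0)=\emptyset$) therefore gives $u\le\phi$ in $B_\rho(x_0)$. Strictly, we perturb $\phi$ to $\phi+\varepsilon$, or shift the centre slightly, to contradict $u(x_0)>0=\phi(x_0)$. Hence
\[
\sup_{\partial B_{\theta|x_0|}(x_0)}u\ \ge\ s\,\theta^{\frac{4}{3-m}}|x_0|^{\frac{4+\alpha}{3-m}} .
\]
This already has the right scaling.

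\emph{Passing to balls centred at $0$.} For the sharp constant we mimic the proof of Theorem~\ref{thm:non-de-homo}. We use the radial barrier
\[
\Psi(x)=\eta\, c_*|x|^{\frac{4+\alpha}{3-m}},
\qquad c_*=\Big(\tfrac{(3-m)^4}{(4+\alpha)^3(1+\alpha+m)}\Big)^{\frac{1}{3-m}},
\]
recentred at points $x_k\to0$ of $\{u>0\}\cap\{h=0\}$: set $\Psi_k(x)=\eta c_*|x-x_k|^\beta$ with $\beta=\frac{4+\alpha}{3-m}$. The constant $c_*$ makes $c_*|x|^\beta$ an exact solution of the homogeneous equation. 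With the factor $\eta<1$ we get
\[
\Delta_\infty\Psi=\eta^3 c_*^3\beta^3(\beta-1)|x|^{3\beta-4}=\eta^{3-m}|x|^\alpha\Psi^m,
\]
leaving a slack of $(1-\eta^{3-m})|x|^\alpha\Psi^m\sim|x|^{\frac{3\alpha+4m}{3-m}}$. This is precisely the homogeneity in \eqref{cond:non-de-h2}. So for $\mu_0$ small, depending on $\eta,\alpha,m$, the slack absorbs $h$, and $\Psi$ is a strict supersolution of $\Delta_\infty w\le|x|^\alpha w^m+h$ in $B_{r_\eta}$.

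The hard step is recentring, because at $x_k$ the weight is $|x|^\alpha$, not $|x-x_k|^\alpha$. We handle it by splitting $B_r(x_k)$ into the region $|x-x_k|\ge |x_k|/\theta'$, where $|x|\approx|x-x_k|$ up to a factor close to $1$, also absorbed into $\eta$, and the small ball $B_{\theta|x_k|}(x_k)$. On the small ball the first step controls $u$ from below at the right scale, which is where \eqref{cond:non-de-h1}, $\theta$ and $s$ enter. Comparison is done on $\{u>0\}\cap B_r(x_k)\setminus\{0\}$. At the origin $u(0)=0\le\Psi_k(0)$, so the degeneracy point lies in the boundary data, as Theorem~\ref{thm:de-set-comparison} requires. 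Assuming $\sup_{\partial B_r}u<\eta c_*r^\beta$, we get $u\le\Psi_k$ on the boundary of this set. Comparison then forces $u\le\Psi_k$, contradicting $u(x_k)>0=\Psi_k(x_k)$. Letting $x_k\to0$ and using continuity of $u$ gives the claimed bound for all $r<r_\eta$.

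The main obstacle is this recentring and weight-mismatch step: making sure the slack $1-\eta^{3-m}$ simultaneously absorbs $h$ via \eqref{cond:non-de-h2} and the discrepancy between $|x|^\alpha$ and $|x-x_k|^\alpha$ near $x_k$. The first step is designed to supply exactly the near-field information needed there.
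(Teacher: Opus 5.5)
Your first step is correct and coincides with the paper's Step~1. On $B_{\theta|x_k|}(x_k)$ the weight is at least $(1-\theta)^\alpha|x_k|^\alpha$, the barrier $s|x_k|^{\alpha/(3-m)}|x-x_k|^{4/(3-m)}$ is a supersolution by (H1) (with $h(x_k)=0$ at the centre), and comparison gives $z_k\in\partial B_{\theta|x_k|}(x_k)$ with $u(z_k)>s\theta^{4/(3-m)}|x_k|^{\beta}$.

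The gap is in your second step, and it is the step you flagged as hard.
\begin{itemize}
\item The recentred barrier $\Psi_k(x)=\eta c_*|x-x_k|^\beta$ satisfies $\Delta_\infty\Psi_k=\eta^3c_*^m|x-x_k|^{\alpha+m\beta}$. Since $h\le0$, at smooth points it can be a supersolution only where $|x|\ge\kappa|x-x_k|$, with $\kappa=\eta^{(3-m)/\alpha}$.
\item The complementary set $\{|x|<\kappa|x-x_k|\}$ is a ball containing the origin, of radius $\kappa|x_k|/(1-\kappa^2)$. This radius is large compared with $|x_k|$ precisely when $\eta$ is close to $1$.
\item For instance, at $x=0$ we have $\Delta_\infty\Psi_k(0)=\eta^3c_*^m|x_k|^{3\beta-4}>0\ge h(0)=|0|^\alpha\Psi_k(0)^m+h(0)$.
\item Since $0\in\partial\{u>0\}$, this ball meets the positive phase, so restricting the comparison to $\{u>0\}$ does not help.
\item Putting $0$ into the boundary data does not help either. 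Theorem~\ref{thm:de-set-comparison} removes only the set $F=\{0\}$ itself and still requires the supersolution inequality on $\Omega\setminus F$.
\item $\Psi_k$ also need not be a supersolution in a punctured neighbourhood of $x_k$. There the slack is of order $|x_k|^\alpha|x-x_k|^{m\beta}$, while (H1) allows $h$ to be as negative as $-C|x_k|^{3\alpha/(3-m)}|x-x_k|^{4m/(3-m)}$, and $4m/(3-m)<m\beta$.
\end{itemize}
Your splitting treats $|x-x_k|\ge|x_k|/\theta'$ and $B_{\theta|x_k|}(x_k)$, but says nothing about the annulus between them, which contains the origin and the whole failure set. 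Moreover, Step~1 gives a \emph{lower} bound for $u$ at one point of $\partial B_{\theta|x_k|}(x_k)$. If you excise that ball, what you need on its boundary is the opposite bound $u\le\Psi_k$. Finally, your contradiction point $x_k$ lies inside the excised ball.

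The missing idea is to keep the barrier centred at the degeneracy point, where the weight is matched exactly, and to use Step~1 only to supply a contradiction point at the right scale. Since $|z_k|\le(1+\theta)|x_k|$, Step~1 gives $u(z_k)>b|z_k|^\beta$ with $b=s\theta^{4/(3-m)}(1+\theta)^{-\beta}$. The paper then takes $\Phi_k(x)=b|z_k|^\beta+\eta c_*(|x|-|z_k|)_+^\beta$.
\begin{itemize}
\item The positive plateau is essential. With $h<0$, the zero plateau of the homogeneous barrier $c(|x|-|x_\ell|)_+^\beta$ is not a supersolution.
\item On $\{|x|\le|z_k|\}$ one has $\Delta_\infty\Phi_k-|x|^\alpha\Phi_k^m\le-b^m|x|^{(3\alpha+4m)/(3-m)}$.
\item For $|x|>|z_k|$, with $t=|z_k|/|x|$, the deficit is $G_\eta(t)|x|^{(3\alpha+4m)/(3-m)}$, and $\mu_0:=\min_{[0,1]}G_\eta>0$ is independent of $k$. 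This is why $\mu_0$ depends on $\theta$ and $s$; in your version it depended only on $\eta,\alpha,m$.
\item Then (H2) makes $\Phi_k$ a supersolution in $B_{r_\eta}$, and $u(0)=0<\Phi_k(0)$ on $F$.
\item Theorem~\ref{thm:de-set-comparison} on $B_r$, with $|z_k|<r$, contradicts $u(z_k)>\Phi_k(z_k)$ unless $\sup_{\partial B_r}u\ge b|z_k|^\beta+\eta c_*(r-|z_k|)^\beta$. Letting $k\to\infty$ gives the claim.
\end{itemize}
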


\begin{remark}
A more convenient version of \eqref{cond:non-de-h2} can be
\begin{equation*}
\tag{H2'}
h(x)=o\left(|x|^{\frac{3\alpha+4m}{3-m}}\right),
\end{equation*}
as $|x|\to0$ and the result holds.
\end{remark}

\subsection{Organization of the paper}
The remainder of the paper is organized as follows. In Section 2, we introduce the notation and establish the basic properties of the Dirichlet problem that will be used throughout the paper. In particular, we discuss the viscosity framework and prove the existence, non-negativity, and $L^\infty$-bounds of solutions, together with the auxiliary comparison and regularity estimates needed in the sequel.

In Section 3, we establish the degeneracy set comparison principle. Then we derive the corresponding slice uniqueness property. 

Section 4 is devoted to the improved regularity of solutions near free-boundary points. Adapting the iteration argument from the homogeneous H\'enon-type setting to the present inhomogeneous problem, we obtain the sharp upper growth estimate with exponent $\frac{4+\alpha}{3-m}$.

Section 5 is devoted to the non-degeneracy of solutions near the free boundary for both homogeneous and inhomogeneous case from various perspectives. Combined with the upper growth estimate obtained in Section 4, this yields the optimal growth rate dictated by the interaction between the strong absorption and the degenerate H\'enon-type weight.

Finally, in Section 6, we conclude with several remarks and open questions concerning possible extensions of the present analysis.

\section{Preliminary Results}

In this section, we first claim the definition of  viscosity solution to the H\'enon-type equation driven by infinity Laplace operator. Throughout this paper, we work with the prototype $f(|x|,u(x))=|x|^{\alpha}u_{+}^m$,
\begin{equation*}
    \Delta_\infty u=|x|^\alpha u_+^m+h(x).
\end{equation*}

Then we introduce some fundamental lemmas, propositions and theorems that would be useful in the following argument.

\begin{definition}[\textbf{Viscosity solution}]
We say that a function $u\in C^0(\Omega)$ is a viscosity
subsolution (respectively, supersolution) of the PDE
\begin{equation}
\label{eq:viscosity-def}
\Delta_\infty u(x)
=
|x|^\alpha (u(x))_+^m+h(x)
\qquad\text{in }\Omega
\end{equation}
if, for every $\varphi\in C^2(\Omega)$ such that
$u-\varphi$ has a local maximum (resp. minimum) at some
$x_0\in\Omega$, then
\[
\Delta_\infty\varphi(x_0)
\ge
|x_0|^\alpha\bigl(u(x_0)_+\bigr)^m+h(x_0),
\qquad
(resp. \le).
\]

A function $u\in C^0(\Omega)$ is called a viscosity
solution of \eqref{eq:viscosity-def} if it is both a viscosity subsolution and a viscosity supersolution.

Furthermore, if $u$ is a viscosity solution of the equation \eqref{eq:viscosity-def} and $u(x) = g(x)$ on $\partial\Omega$, we say $u$ is a viscosity solution of the corresponding Dirichlet problem.
\end{definition}

\begin{lemma}[\textbf{Comparison Principle -}
{\cite[Lemma~4.1]{BhattacharyaMohammed2011}}]
\label{lem:boundary-comparison}
Let $f_i:\Omega\times\mathbb{R}\to\mathbb{R}$,
$i=1,2$, be continuous, and let
$u,v\in C(\overline{\Omega})$ satisfy, in the viscosity sense,
\[
\Delta_\infty u \geq f_1(x,u),
\qquad
\Delta_\infty v \leq f_2(x,v)
\quad\text{in }\Omega.
\]
Furthermore, assume that either $f_1(x,t)$ or $f_2(x,t)$ is
non-decreasing in $t$, and that
$f_1(x,t)>f_2(x,t)$ for all $(x,t)\in\Omega\times\mathbb{R}$.
If $u\leq v$ on $\partial\Omega$,
then $u\leq v$ in $\Omega$.
\end{lemma}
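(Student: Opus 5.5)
We argue by contradiction with the doubling-of-variables technique and the Crandall--Ishii--Lions theorem of sums. Suppose $\delta:=\max_{\overline\Omega}(u-v)>0$. Since $u\le v$ on $\partial\Omega$, every maximum point of $u-v$ lies in the interior of $\Omega$. For $\varepsilon>0$ we consider
\[
\Phi_\varepsilon(x,y)=u(x)-v(y)-\frac{1}{4\varepsilon}|x-y|^4
\quad\text{on }\overline\Omega\times\overline\Omega .
\]
We use a quartic penalization, rather than a quadratic one, because its Hessian vanishes to second order on the diagonal. This is convenient for the degenerate operator $\Delta_\infty$.

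Let $(x_\varepsilon,y_\varepsilon)$ be a maximum point of $\Phi_\varepsilon$. The standard argument gives $|x_\varepsilon-y_\varepsilon|^4/\varepsilon\to0$. Moreover, along a subsequence, $x_\varepsilon,y_\varepsilon\to\hat x$, where $\hat x$ is a maximum point of $u-v$. By continuity we also get $u(x_\varepsilon)\to u(\hat x)$ and $v(y_\varepsilon)\to v(\hat x)$, with $u(\hat x)-v(\hat x)=\delta$. Since $\hat x\in\Omega$, both $x_\varepsilon$ and $y_\varepsilon$ are interior points for small $\varepsilon$.

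Set $p_\varepsilon=\varepsilon^{-1}|x_\varepsilon-y_\varepsilon|^2(x_\varepsilon-y_\varepsilon)$. The theorem of sums provides $(p_\varepsilon,X_\varepsilon)\in\overline J^{2,+}u(x_\varepsilon)$ and $(p_\varepsilon,Y_\varepsilon)\in\overline J^{2,-}v(y_\varepsilon)$ with $X_\varepsilon\le Y_\varepsilon$. The viscosity inequalities, which extend to closed semijets by continuity of $f_i$, then give
\[
f_1(x_\varepsilon,u(x_\varepsilon))\le \langle X_\varepsilon p_\varepsilon,p_\varepsilon\rangle\le\langle Y_\varepsilon p_\varepsilon,p_\varepsilon\rangle\le f_2(y_\varepsilon,v(y_\varepsilon)).
\]
The degenerate case $p_\varepsilon=0$ (for instance $x_\varepsilon=y_\varepsilon$) needs no separate treatment, since the chain still reads $f_1\le0\le f_2$. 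Letting $\varepsilon\to0$ and using the continuity of $f_1$ and $f_2$, we obtain
\[
f_1(\hat x,u(\hat x))\le f_2(\hat x,v(\hat x)).
\]

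Now we use monotonicity, recalling that $u(\hat x)>v(\hat x)$. If $f_1$ is non-decreasing in $t$, then
\[
f_1(\hat x,v(\hat x))\le f_1(\hat x,u(\hat x))\le f_2(\hat x,v(\hat x)).
\]
If instead $f_2$ is non-decreasing in $t$, then
\[
f_1(\hat x,u(\hat x))\le f_2(\hat x,v(\hat x))\le f_2(\hat x,u(\hat x)).
\]
Either way, some point $(\hat x,t)$ satisfies $f_1(\hat x,t)\le f_2(\hat x,t)$. This contradicts the strict inequality $f_1>f_2$, so $u\le v$ in $\Omega$.

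The main technical step is the application of the theorem of sums. We must check that the semijets $(p_\varepsilon,X_\varepsilon)$ and $(p_\varepsilon,Y_\varepsilon)$ share the same gradient $p_\varepsilon$, so that the matrix ordering $X_\varepsilon\le Y_\varepsilon$ transfers directly to the quadratic forms $\langle X_\varepsilon p_\varepsilon,p_\varepsilon\rangle\le\langle Y_\varepsilon p_\varepsilon,p_\varepsilon\rangle$. We must also check the passage to the limit in the viscosity inequalities, which rests only on the continuity of $f_i$ and of $u,v$.
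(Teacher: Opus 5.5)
The paper gives no proof of this lemma; it imports it from Bhattacharya--Mohammed. Your argument is correct and is the standard viscosity proof of such a strict comparison principle: doubling variables, the theorem of sums giving a common gradient $p_\varepsilon$ with $X_\varepsilon\le Y_\varepsilon$, passing to the limit by continuity of $f_i$, then monotonicity plus $f_1>f_2$ at the interior maximum point $\hat x$. You tacitly assume $\Omega$ is bounded, which you need for the maxima of $u-v$ and $\Phi_\varepsilon$ to exist (this holds in every application in the paper); also, the quartic penalization is harmless but unnecessary, since you never use bounds on $p_\varepsilon$ or $X_\varepsilon$.
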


\begin{theorem}[\textbf{Comparison principle -} {\cite[Theorem~2.5]{LinLiu2022}}]
\label{thm:Liu-comparison}
Suppose $f(x)\in C(\Omega)\cap L^\infty(\Omega)$ with $f(x)\leq 0$,
$\lambda(x)\in C(\Omega)\cap L^\infty(\Omega)$ with
$\inf_{\Omega}\lambda(x)>0$ and $m\in(0,3)$. Let $u$ and
$v\in C(\overline{\Omega})$ satisfying
\[
\Delta_\infty u(x)-\lambda(x)(u(x))_+^m\geq f(x),
\qquad x\in\Omega,
\]
and
\[
\Delta_\infty v(x)-\lambda(x)(v(x))_+^m\leq f(x),
\qquad x\in\Omega
\]
in the viscosity sense. If $v\geq u$ on $\partial\Omega$ and
$v\geq 0$ on $\partial\Omega$, then $v\geq u$ in $\Omega$.
\end{theorem}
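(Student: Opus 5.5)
The plan is to argue by contradiction. In two steps, I will reduce the statement to the strict comparison principle of Lemma~\ref{lem:boundary-comparison} by freezing the nonlinearity. Strict positivity of $\lambda$ turns a gap between $u$ and $v$ into a uniform gap between the right-hand sides.

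\emph{Step 1: $v\ge 0$ in $\Omega$.} Let $V:=\{x\in\Omega: v(x)<0\}$ and suppose $V\neq\emptyset$. On $V$ we have $v_+=0$, so in the viscosity sense
\[
\Delta_\infty v\le f(x)\le 0 .
\]
Fix $z\in\R^n$ and $R>0$ with $\overline\Omega\subset B_R(z)$. For $\varepsilon>0$ set
\[
w_\varepsilon(x):=\varepsilon\bigl(|x-z|^{4/3}-R^{4/3}\bigr)\le 0 \quad\text{on }\overline\Omega .
\]
A direct computation for the radial profile $r^{4/3}$ gives $\Delta_\infty w_\varepsilon=c_0\varepsilon^3$, with $c_0=(4/3)^3\cdot\frac13>0$. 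At the origin of the profile the gradient vanishes, and no $C^2$ test function can touch $r^{4/3}$ from above there, so the subsolution property also holds at $x=z$.

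On $\partial V$ we have $v\ge 0\ge w_\varepsilon$: at interior boundary points $v=0$ by continuity, and on $\partial\Omega$ we use the hypothesis $v\ge 0$. Apply Lemma~\ref{lem:boundary-comparison} in $V$ with $f_1\equiv c_0\varepsilon^3$ (constant, hence non-decreasing in $t$) and $f_2(x,t)=f(x)\le 0<f_1$. This gives $w_\varepsilon\le v$ in $V$. Letting $\varepsilon\to0$ yields $v\ge 0$ in $V$, which contradicts the definition of $V$. Hence $v\ge0$ in $\Omega$.

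\emph{Step 2: comparison.} Suppose $M:=\max_{\overline\Omega}(u-v)>0$. Fix $\delta\in(0,M)$ and let
\[
D:=\{x\in\Omega:\ u(x)>v(x)+\delta\}.
\]
Then $D$ is open and nonempty, and $\overline D\cap\partial\Omega=\emptyset$ because $u\le v$ on $\partial\Omega$. On $\partial D$ we have $u=v+\delta$.

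On $D$, Step 1 gives $u>v+\delta\ge\delta>0$. Since $u,v$ are bounded on $\overline\Omega$ and $t\mapsto t^m$ is strictly increasing, there is $c(\delta)>0$ with $u^m-v^m\ge c(\delta)$ on $D$. Set
\[
g_1(x):=\lambda(x)u(x)^m+f(x),\qquad g_2(x):=\lambda(x)v(x)^m+f(x).
\]
Both are continuous on $D$ and satisfy $g_1-g_2\ge (\inf_\Omega\lambda)\,c(\delta)>0$. Moreover, in $D$ we have $\Delta_\infty u\ge g_1(x)$ and $\Delta_\infty(v+\delta)=\Delta_\infty v\le g_2(x)$ in the viscosity sense: the equations are only evaluated at the touching point, where the zeroth-order term is exactly the frozen value.

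Apply Lemma~\ref{lem:boundary-comparison} on $D$ with $f_1(x,t)=g_1(x)$ and $f_2(x,t)=g_2(x)$, which are constant in $t$ and satisfy $f_1>f_2$. It gives $u\le v+\delta$ in $D$, contradicting the definition of $D$. Therefore $u\le v$ in $\Omega$.

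The main obstacle is that the natural nonlinearity $\lambda t_+^m+f$ gives no strict inequality between sub- and supersolution. The usual scalings $(1\pm\varepsilon)u$ or $(1\pm\varepsilon)v$ push the $\lambda t^m$ term in the wrong direction because $m<3$. The freezing device above avoids this, but it needs $u^m-v^m$ bounded below on $D$. That is exactly where both $\inf\lambda>0$ and the sign information from Step 1 (itself relying on $f\le0$ and $v\ge0$ on $\partial\Omega$) are used. I would also double-check that Lemma~\ref{lem:boundary-comparison} is applicable on the possibly irregular open set $D$, with $u,v+\delta\in C(\overline D)$; this holds since the lemma is stated for bounded open sets.
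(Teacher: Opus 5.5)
Your proof is correct. The paper gives no proof of this statement: it is quoted from \cite[Theorem~2.5]{LinLiu2022} and used as a black box in the proof of Theorem~\ref{thm:de-set-comparison}. Your argument is therefore an independent, self-contained derivation from the strict comparison Lemma~\ref{lem:boundary-comparison}.

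Both steps check out. In Step~1, the barrier is the same $|x|^{4/3}$ profile the paper uses as $w_{k,\delta}$ in Proposition~\ref{prop:penalized-approximation}. It satisfies $\Delta_\infty w_\varepsilon=64\varepsilon^3/81$ away from the vertex, and no $C^2$ function touches it from above at the vertex. In Step~2:
\begin{itemize}
\item $\overline D$ avoids $\partial\Omega$ by continuity of $u-v$ on $\overline\Omega$;
\item the frozen data $g_1,g_2$ are continuous in $D$;
\item the bound $(v+\delta)^m-v^m\ge c(\delta)>0$ on $0\le v\le\|v\|_{L^\infty}$ also holds for $m<1$, by concavity.
\end{itemize}
If $V$ or $D$ is disconnected, apply the lemma on each component; its boundary lies in $\partial V$, respectively $\partial D$.

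Your route also shows something the citation hides. Lemma~\ref{lem:boundary-comparison} only requires $f_1>f_2$ pointwise, so the uniform gap $(\inf_\Omega\lambda)\,c(\delta)$ is never used. Step~2 only needs $\lambda>0$ at each point of $D$, so your closing remark overstates the role of $\inf_\Omega\lambda>0$.

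As a consequence, the same freezing argument proves Theorem~\ref{thm:de-set-comparison} directly. If $u,v\ge0$ and $u\le v$ on $\partial\Omega\cup F$, then $\overline D\subset\{u-v\ge\delta\}$ misses $\partial\Omega\cup F$. Hence $\lambda>0$ on $D$ and the same contradiction follows, with no shift $v+\varepsilon$ and no auxiliary domain $\Omega_{\delta/2}$. The paper instead takes the strictly positive case as an external input and reaches the degenerate case by an approximation.
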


\begin{theorem}[\textbf{A priori $L^\infty$ bounds-} {\cite[Theorem 5.3]{BhattacharyaMohammed2012}}]
\label{thm:L infty}
Let $\Omega\subset\mathbb{R}^n$ be a bounded domain, $g\in C(\partial\Omega)$, and
$f\in C(\Omega\times\mathbb{R},\mathbb{R})$ satisfies that for every compact interval $I\subseteq R$,
\begin{equation}
\label{local bdd f}
\sup_{\Omega\times I}|f(x,t)|<\infty.
\end{equation}
Assume further that
\begin{equation}
\label{growth condition f}
\begin{cases}
\displaystyle
\liminf_{t\to+\infty}
\frac{\inf_{x\in\Omega}f(x,t)}{t^3}
=\mathfrak L_{-},
\\[1.2ex]
\displaystyle
\liminf_{t\to-\infty}
\frac{\sup_{x\in\Omega}f(x,t)}{t^3}
=\mathfrak L_{+},
\end{cases}
\end{equation}
for some $\mathfrak L_{\pm}\in[0,\infty]$. Then there exists a constant
$C>0$, depending only on $f$, $g$, and $\operatorname{diam}(\Omega)$, such that
\[
\|u\|_{L^\infty(\Omega)}\le C
\]
for every viscosity solution $u\in C(\overline{\Omega})$ of
\eqref{eq:general-dirichlet}.
\end{theorem}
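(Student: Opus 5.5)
The statement immediately above is Theorem~\ref{thm:L infty}, quoted from \cite[Theorem 5.3]{BhattacharyaMohammed2012}. Its proof should go by comparison with explicit radial barriers built from the growth condition \eqref{growth condition f}, using Lemma~\ref{lem:boundary-comparison}. We treat only the upper bound; the lower bound follows symmetrically, using $\mathfrak L_+$ and $t\to-\infty$. Fix a ball $B_R(z)\supset\Omega$ with $R\sim\operatorname{diam}(\Omega)$.

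\emph{Step 1: a lower bound on $f$.} Assume first that $\mathfrak L_-\in(0,\infty]$. Then there are $\ell>0$ and $T_0>0$ with $f(x,t)\ge \ell t^3$ for $t\ge T_0$ and all $x\in\Omega$. By \eqref{local bdd f} we also have $f\ge -K$ on $\Omega\times[-M,T_0]$, where $M:=\max|g|$.

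\emph{Step 2: the barrier.} Let $u$ be a solution and let $\Omega^+=\{u>T_1\}$ for a threshold $T_1\ge\max(T_0,M)$. On $\Omega^+$ we have $\Delta_\infty u\ge \ell u^3\ge \ell T_1^3$. Then $u$ is a subsolution of $\Delta_\infty w=c$ with $c=\ell T_1^3>0$, and $u=T_1$ on $\partial\Omega^+$. We compare $u$ with the supersolution
\[
\psi(x)=T_1+A\bigl(R^{4/3}-|x-z|^{4/3}\bigr),\qquad A>0 \text{ small}.
\]
Its infinity Laplacian is negative, namely $-A^3\tfrac{64}{81}$ up to a dimensional constant, and this is strictly less than $c$. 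Lemma~\ref{lem:boundary-comparison} applies with $f_1\equiv c$ and $f_2\equiv$ the constant for $\psi$, and gives $u\le \psi\le T_1+AR^{4/3}$.

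The catch is that this bound is not yet uniform, because $c$ does not control $A$ from above. The fix is to use the superlinear cubic growth to get a Keller--Osserman type bound instead. The function
\[
W(x)=\frac{C_\ell}{R^2-|x-z|^2}\quad\text{(or a suitable power)}
\]
is a large supersolution of $\Delta_\infty w\le \ell w^3$ in $B_R(z)$, blowing up on $\partial B_R(z)$, once $C_\ell$ is chosen in terms of $\ell$ and $R$. The exponent $3$ exactly matches the scaling of $\Delta_\infty$, and this is why $t^3$ appears in \eqref{growth condition f}. Shift the barrier to $W+T_1$ and compare on $\{u>T_1\}\cap B_{R-\delta}$. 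Strict inequality of the right-hand sides comes from replacing $\ell$ by $\ell/2$. Letting $\delta\to0$ then gives $u\le T_1+C_\ell/R^2$, a bound depending only on $f$, $g$ and $R$.

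\emph{Step 3: the case $\mathfrak L_-=0$.} Here we need a different barrier. When $f$ is merely bounded below near $+\infty$ by something like $-\epsilon t^3$ for every $\epsilon$, use
\[
\psi=a+b\bigl(R^{4/3}-|x-z|^{4/3}\bigr).
\]
Its infinity Laplacian is $-\kappa b^3$, which behaves like $t^3$ at scale $t\sim bR^{4/3}$. Choosing $\epsilon$ small relative to $\kappa R^{-4}$ makes $\psi$ a strict supersolution whenever $\psi\ge T_\epsilon$. Comparison with $\psi$, applied after a small perturbation to secure strictness and the monotonicity hypothesis of Lemma~\ref{lem:boundary-comparison}, gives the bound.

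The main obstacle is this degenerate case $\mathfrak L_-=0$. There one has to check carefully that the comparison lemma applies: it requires some monotonicity in $t$ and strict inequality, and $f$ itself need not be monotone. I would handle this by putting the constant right-hand sides on the barrier side, since a constant is trivially non-decreasing in $t$, and by comparing only on the superlevel set where the growth bound holds.
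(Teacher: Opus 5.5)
The paper gives no proof of this statement; it is quoted from \cite[Theorem~5.3]{BhattacharyaMohammed2012}. So your argument has to stand on its own, and as written it has two genuine gaps.

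\emph{First gap: the barrier in Step~2 does not exist.} Since $\Delta_\infty(\lambda W)=\lambda^3\Delta_\infty W$, the inequality $\Delta_\infty W\le \ell W^3$ is invariant under $W\mapsto\lambda W$, so no choice of $C_\ell$ can help. For $W=C(R^2-|x-z|^2)^{-p}$ one gets $\Delta_\infty W\sim C^3(R^2-|x-z|^2)^{-3p-4}$, which dominates $\ell W^3\sim \ell C^3(R^2-|x-z|^2)^{-3p}$ near $\partial B_R(z)$. More generally, a radial supersolution with $\phi'>0$ satisfies $(\phi'^4)'\le \ell(\phi^4)'$, hence $\phi'\le(\ell\phi^4+c)^{1/4}$, so $\phi$ cannot blow up at a finite radius. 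The cubic growth that matches the scaling of $\Delta_\infty$ is exactly the borderline where large solutions fail, not the reason they exist.

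The detour was also unnecessary, because the ``catch'' you identify is not real. Since $c>-\tfrac{64}{81}A^3$ for every $A>0$, your first comparison gives $u\le T_1+AR^{4/3}$ for all $A>0$, hence $u\le T_1$. More directly, testing with a constant at an interior maximum gives $f(x_0,u(x_0))\le 0$. In any case $\mathfrak L_->0$ implies $f(x,t)\ge-\epsilon t^3$ for large $t$, so this case is already contained in Step~3.

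\emph{Second gap: Step~3 stops exactly at the obstacle you name.} This is the only case that matters; it is the one the paper uses, where $\mathfrak L_\pm=0$. Lemma~\ref{lem:boundary-comparison} needs $f_1>f_2$ on all of $\Omega\times\mathbb{R}$. On the subsolution side all you know is $\Delta_\infty u\ge f(x,u)\ge-\epsilon u^3$ on $\{u>T\}$. Since $-\epsilon t^3$ is unbounded below, no constant $f_1$ dominates $f_2=-\tfrac{64}{81}b^3$ for all $t$. The other option, making $\psi$ a strict supersolution of the original equation ($-\tfrac{64}{81}b^3<-\epsilon\psi^3\le f(x,\psi)$), does not allow comparison with $u$ either, because $f$ is not monotone in $t$. ``A small perturbation'' does not address this.

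The missing idea is to use that $S:=\sup_\Omega u<\infty$ (because $u\in C(\overline\Omega)$) and then absorb. Take $z\in\Omega$, $R=\operatorname{diam}\Omega$, $T:=\max\{T_\epsilon,\max_{\partial\Omega}g,0\}$, and suppose $S>T$. Then:
\begin{itemize}
\item $u$ is a viscosity subsolution of $\Delta_\infty w=-\epsilon S^3$ in $\{u>T\}$, with $u\le T$ on $\partial\{u>T\}$;
\item $\psi=T+b\bigl(R^{4/3}-|x-z|^{4/3}\bigr)\ge T$ solves $\Delta_\infty\psi=-\tfrac{64}{81}b^3$.
\end{itemize}
For $b>(\tfrac{81}{64}\epsilon)^{1/3}S$ the lemma applies with these constant right-hand sides and gives $S\le T+bR^{4/3}$. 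Let $b$ decrease to $(\tfrac{81}{64}\epsilon)^{1/3}S$ and choose $\epsilon=\tfrac{8}{81}R^{-4}$. This yields $S\le T+\tfrac12 S$, i.e.\ $S\le 2T$, which depends only on $f$, $g$ and $\operatorname{diam}\Omega$.

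The lower bound follows by applying the same argument to $-u$. It solves the equation with $\tilde f(x,t)=-f(x,-t)$, and for $\tilde f$ the $\mathfrak L_-$ condition is exactly the original $\mathfrak L_+$ condition.
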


\begin{theorem}[\textbf{Existence -} {\cite[Theorem 5.5]{BhattacharyaMohammed2012}}]
\label{thm:existence}
Let $\Omega\subset\mathbb{R}^n$ be a bounded domain and $g\in C(\partial\Omega)$.
If $f\in C(\Omega\times\mathbb{R},\mathbb{R})$ satisfies
\eqref{local bdd f} and \eqref{growth condition f}, then the
Dirichlet problem \eqref{eq:general-dirichlet} admits a viscosity solution
$u\in C(\overline{\Omega})$.
\end{theorem}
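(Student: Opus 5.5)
The plan is to construct the solution by Perron's method, using the growth condition \eqref{growth condition f} only to build global barriers, and the strict-inequality comparison of Lemma~\ref{lem:boundary-comparison} to control the Perron envelope. Since $f$ need not be monotone in $t$, I would first reduce to a problem with bounded nonlinearity. By \eqref{growth condition f}, for large $|t|$ the function $f(\cdot,t)$ is at least $-\varepsilon t^3$ for $t\to+\infty$ and at most $\varepsilon |t|^3$ for $t\to-\infty$, for a small $\varepsilon$ depending on $\mathfrak L_\pm$. This is exactly what is needed to dominate $f$ by explicit radial functions. Concretely, fix $x_0$ with $\operatorname{dist}(x_0,\Omega)=1$ and $R=\operatorname{diam}(\Omega)+2$. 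I would look for
\[
\overline w(x)=M+A\,\psi(|x-x_0|),\qquad \underline w=-\overline w,
\]
with $\psi$ concave, e.g.\ $\psi(\rho)=R^{4/3}-(R-\rho)^{4/3}$ or a cubic-type profile. A direct computation gives $\Delta_\infty\overline w=A^3\psi'^2\psi''\le -cA^3$ on $\Omega$.

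Choosing $M\ge\|g\|_\infty$ and $A$ large, the lower bound on $f$ gives $\Delta_\infty \overline w< f(x,\overline w)$ strictly. The point is that $-cA^3$ beats $-\varepsilon(M+CA)^3$ once $\varepsilon$ is small relative to $c/C^3$, and if $\mathfrak L_-=0$ one may take $\varepsilon$ arbitrarily small. When $\mathfrak L_->0$ the inequality is even easier, since then $f(x,t)\ge0$ for large $t$. Symmetrically, $\underline w$ is a strict subsolution with $\underline w\le g\le\overline w$ on $\partial\Omega$. These barriers also yield the a priori bound of Theorem~\ref{thm:L infty}: any solution $u$ satisfies $\underline w\le u\le\overline w$. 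I would prove this by touching arguments, noting that at a first contact point the strict inequality contradicts the equation, so no monotonicity of $f$ is needed when one of the two functions is a classical strict barrier.

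Next, I would truncate. Set $K=\max\|\overline w\|_\infty$ and $\tilde f(x,t)=f(x,\max(-K,\min(t,K)))$, which is bounded and continuous by \eqref{local bdd f}. Define the Perron envelope
\[
u=\sup\{w\in C(\overline\Omega):\ w \text{ a subsolution of } \Delta_\infty w=\tilde f(x,w),\ \underline w\le w\le \overline w,\ w\le g \text{ on }\partial\Omega\}.
\]
I would then follow the standard steps in order:
\begin{enumerate}
\item The class is nonempty, since it contains $\underline w$.
\item The upper semicontinuous envelope $u^*$ is a subsolution, by the usual stability of viscosity subsolutions under suprema.
\item If the lower envelope $u_*$ failed to be a supersolution at some point, the bump construction (adding a small quadratic cone cap $\varphi+\delta-\gamma|x-y|^2$) would produce a larger subsolution, contradicting maximality.
\item Boundary continuity $u=g$ on $\partial\Omega$ comes from local barriers at each boundary point. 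These are cone-type functions $g(y)\pm\epsilon\pm L|x-y|$ modified by a small concave or convex correction to make them strict, which is possible because $\tilde f$ is bounded. Since $\Omega$ is merely bounded, exterior cones are not available; but infinity-Laplacian cones $|x-y|$ work at every boundary point, because $\Delta_\infty$ of a cone vanishes away from its vertex.
\end{enumerate}

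The main obstacle is closing the Perron argument, namely showing $u^*\le u_*$ so that $u$ is continuous. With $f$ non-monotone, the comparison of Lemma~\ref{lem:boundary-comparison} does not apply directly between $u^*$ and $u_*$. I would try to avoid comparison between two merely viscosity functions altogether. Instead I would obtain continuity from uniform interior Lipschitz estimates: subsolutions of $\Delta_\infty w\ge -\|\tilde f\|_\infty$ satisfy comparison with quadratic-corrected cones, which gives a uniform local Lipschitz bound for every member of the Perron class. Combined with the boundary barriers, this yields an equicontinuous family, so $u$ is continuous and $u=u^*$ is a subsolution. Then $u=u_*$ is a supersolution by step 3, and $u$ solves the truncated problem. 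Finally, the barrier bound $|u|\le K$ shows that the truncation is inactive, so $u$ solves \eqref{eq:general-dirichlet}.
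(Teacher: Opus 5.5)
The paper does not prove this statement; it quotes it from Bhattacharya--Mohammed and uses it as a black box (in Theorem~\ref{prop:existence} and Proposition~\ref{prop:penalized-approximation}). Your proposal is therefore an independent, self-contained route, and it works.

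What lets Perron's method run without monotonicity is that every competitor is trapped in $[\underline w,\overline w]$. So the subsolution inequality only ever gives $\Delta_\infty w\ge -K'$, with $K'=\sup_{\Omega\times[-K,K]}|f|$, and this right-hand side does not depend on $w$. Against a constant right-hand side, comparison with a classical strict supersolution is a one-line touching argument. That gives both the upper boundary barrier and the uniform interior Lipschitz bound. For instance, $\phi(x)=w(y)+L|x-y|-\frac{L}{4r}|x-y|^2$ satisfies $\Delta_\infty\phi\le -L^3/(8r)$ on $B_r(y)\setminus\{y\}$, and it dominates $w$ on $\partial B_r(y)\cup\{y\}$ once $L$ is large in terms of $K,K',r$. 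This Lipschitz bound is what replaces the comparison principle normally needed to get $u^*\le u_*$.

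What the citation buys is brevity. What your route buys is a proof that never needs comparison for non-monotone $f$. Your barrier also only uses $f(x,t)\ge-\varepsilon t^3$ (resp. $f(x,t)\le\varepsilon|t|^3$) for large $|t|$ with $\varepsilon R^4<\tfrac{64}{81}$, so it tolerates slightly negative $\mathfrak L_\pm$ as well.

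Three points need tightening.

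First, the side claim that every solution lies between the barriers does not follow from the principle you state. At a maximum of $u-\overline w$, or at a first contact of $\overline w+\tau$, you only learn $\Delta_\infty\overline w(x_1)\ge f(x_1,u(x_1))$ at a value $u(x_1)>\overline w(x_1)$, where strictness of $\overline w$ says nothing. What works is sliding in the parameter. $M+A\psi$ is a strict supersolution for every $A\ge A_0$, and $\psi\ge\psi_{\min}>0$ on $\overline\Omega$. So decreasing $A$ from $+\infty$ to the first contact produces an equality point, and strictness gives the contradiction there. This remark is not needed for existence anyway.

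Second, in the bump step the constraint $w\le\overline w$ is not automatic here, since you have no comparison principle. At a point $y$ with $u(y)=\overline w(y)$, the bump leaves the class. Handle such points separately: a test function $\varphi$ touching $u$ from below at $y$ also touches the $C^2$ function $\overline w$ from below. Hence $\Delta_\infty\varphi(y)\le\Delta_\infty\overline w(y)<f(y,u(y))$, and the supersolution inequality holds trivially. At points with $u(y)<\overline w(y)$, shrink the bump so that it stays below $\overline w$.

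Third, continuity of $u$ at $\partial\Omega$ comes directly from your two boundary barriers. It does not come from equicontinuity of the class, which fails near the boundary.
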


\begin{theorem}[\textbf{Existence and non-negativity}]\label{prop:existence}
Let $B_1\subset\mathbb{R}^n$ be a bounded domain. Assume that$\alpha>0$, $0<m<3$, $h\in C(\Omega)\cap L^\infty(\Omega)$ and $h\le0$, $g\in C(\partial\Omega)$ and $g\ge0$.
Then there exists at least one viscosity solution to the Dirichlet problem
\begin{equation*}
\begin{cases}
    \Delta_\infty u=|x|^\alpha u_+^m+h & \text{in }B_1,\\
    u=g & \text{on }\partial B_1
\end{cases}
\end{equation*}
Moreover, every viscosity solution of the Dirichlet problem is bounded in $L^{\infty}(B_1)$ and nonnegative.
\end{theorem}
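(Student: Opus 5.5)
The statement has three parts: existence, an $L^\infty$ bound for every solution, and non-negativity of every solution. We handle them in that order.

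\textbf{Existence.} We apply Theorem~\ref{thm:existence} with $\Omega=B_1$ and
\[
f(x,t)=|x|^\alpha t_+^m+h(x).
\]
This $f$ is continuous on $B_1\times\mathbb{R}$, since $h\in C(B_1)$ and $t\mapsto t_+^m$ is continuous for $m>0$. Condition \eqref{local bdd f} holds because, for a compact interval $I$,
\[
\sup_{B_1\times I}|f|\le \max_{t\in I} t_+^m+\|h\|_{L^\infty}<\infty .
\]
For \eqref{growth condition f}, recall $h\le 0$ and $0<m<3$.
\begin{itemize}
\item As $t\to+\infty$: $\inf_x f(x,t)\ge -\|h\|_\infty$ (take $x=0$ to see the infimum is about $\inf h$). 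Dividing by $t^3$ gives $\liminf_{t\to+\infty}\inf_x f(x,t)/t^3\ge 0$. Also $\inf_x f(x,t)/t^3\le (h(0))/t^3\to 0$, so this liminf equals $0\in[0,\infty]$.
\item As $t\to-\infty$: $\sup_x f(x,t)=\sup_x h\in[-\|h\|_\infty,0]$. Dividing by $t^3<0$ gives a quantity in $[0,\|h\|_\infty/|t|^3]$, which tends to $0$, so $\mathfrak L_+=0$.
\end{itemize}
Theorem~\ref{thm:existence} then gives a viscosity solution $u\in C(\overline{B_1})$. The same hypotheses allow us to invoke Theorem~\ref{thm:L infty}, so every viscosity solution is bounded in $L^\infty(B_1)$.

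\textbf{Non-negativity.} We compare $u$ with the zero function, following Lin--Liu. Note that $w\equiv 0$ is a viscosity subsolution: $\Delta_\infty 0=0\ge h$, i.e.\ $0\ge |x|^\alpha 0_+^m+h$. Note also that $0\le g=u$ on $\partial B_1$. Theorem~\ref{thm:Liu-comparison} cannot be used directly because $\lambda=|x|^\alpha$ vanishes at the origin, so we argue directly on the negative set. Suppose $\min u<0$, and let $U=\{u<0\}$, a nonempty open set with $\overline U\subset B_1$ (since $g\ge 0$) and $u=0$ on $\partial U$. On $U$ we have $u_+=0$, so $u$ is a viscosity supersolution of
\[
\Delta_\infty u\le h\le 0 .
\]
Hence $u$ is infinity superharmonic in $U$.

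\textbf{Comparison on $U$.} To conclude we would like to use comparison for infinity superharmonic functions: $u\ge 0$ on $\partial U$ should give $u\ge 0$ in $U$, a contradiction. For a clean proof we instead apply Lemma~\ref{lem:boundary-comparison} with a strict perturbation. Take the subsolution
\[
\phi_\varepsilon(x)=\varepsilon\bigl(|x-x_1|^{4/3}-R\bigr),
\]
or simply a function $\phi_\varepsilon$ with $\Delta_\infty\phi_\varepsilon\ge c\varepsilon^3>0$ and $|\phi_\varepsilon|\le C\varepsilon$ on $U$. Set $f_1=c\varepsilon^3$ and $f_2(x,t)=h(x)+|x|^\alpha t_+^m$, which is non-decreasing in $t$. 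Since $f_2=h\le 0<f_1$ for $t\le 0$, strictness holds where it is needed.

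\textbf{Main obstacle.} Lemma~\ref{lem:boundary-comparison} requires $f_1>f_2$ for all $t$, but $f_2$ grows in $t>0$. We handle this by truncation: replace $f_2$ by $h(x)+|x|^\alpha \min(t_+,0)^m=h(x)$. This is legitimate because $u<0$ on $U$, so $u$ remains a supersolution of $\Delta_\infty u\le h$ on $U$. Then $f_1=c\varepsilon^3>0\ge h=f_2$ holds for all $t$. Since $\phi_\varepsilon-C\varepsilon\le 0\le u$ on $\partial U$, the lemma gives $u\ge\phi_\varepsilon-C\varepsilon$ in $U$. Letting $\varepsilon\to0$ yields $u\ge 0$ in $U$, contradicting the definition of $U$. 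Hence $u\ge 0$ on $B_1$.
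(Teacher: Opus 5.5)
Your proposal is correct and takes essentially the paper's route. Existence and the $L^\infty$ bound come from checking local boundedness and the growth conditions with $\mathfrak L_\pm=0$ for Theorems~\ref{thm:L infty} and~\ref{thm:existence}. Non-negativity comes from arguing on $\{u<0\}$, where $u$ is infinity-superharmonic with zero boundary values.

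The only difference is in that last comparison step. The paper cites Jensen's comparison principle, while you re-derive it from Lemma~\ref{lem:boundary-comparison} using the strict subsolution $\varepsilon(|x-x_1|^{4/3}-R)$, much as the paper itself later does with $w_{k,\delta}$. One small correction: $\overline U\subset B_1$ can fail where $g$ vanishes, but this is harmless because $u=0$ on $\partial U$ still holds.
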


Actually, the existence and the priori $L^{\infty}$-estimate for our problem can be viewed as a direct corollary of Theorem \ref{thm:L infty} and \ref{thm:existence}.

\begin{proof}
Set
\[
F(x,t):=|x|^\alpha(t_+)^m+h(x),
\qquad
(x,t)\in B_1\times\mathbb{R}.
\]

Clearly, for any compact interval $I\subset\mathbb R$,
\[
\sup_{(x,t)\in B_1\times I}|F(x,t)|<\infty.
\]

Then we verify the condition \eqref{local bdd f}.

For $t>0$, since $m<3$ and $|x|^\alpha\leq1$ in $B_1$, by direct computation, we yield
\[
\sup_{x\in B_1}
\left|
\frac{F(x,t)}{t^3}
\right|
\leq
t^{m-3}
+
\frac{\|h\|_{L^\infty(B_1)}}{t^3}
\to0
\qquad
\text{as }t\to+\infty,
\]
thus
\[
\liminf_{t\to+\infty}\inf_{x\in B_1}\frac{F(x,t)}{t^3}=0.
\]

On the other hand, for $t<0$, $F(x,t)=h(x)$.

We have
\[
\sup_{x\in B_1}
\left|
\frac{F(x,t)}{t^3}
\right|
\leq
\frac{\|h\|_{L^\infty(B_1)}}{|t|^3}
\to0
\qquad
\text{as }t\to-\infty,
\]
hence
\[
\liminf_{t\to-\infty}
\sup_{x\in B_1}
\frac{F(x,t)}{t^3}
=0.
\]

Therefore, $\mathfrak L_{\pm}=0$, the conditions fitting into Theorem \ref{thm:L infty}, the existence and a prior $L^{\infty}$-estimate of \eqref{growth condition f} is acquired.

Next we want to show that the viscosity solution is nonnegative. The classical argument is quite similar to \cite{AraujoLeitaoTeixeira2016} and \cite{LinLiu2022}. 

By contradiction, suppose the open set $\mathcal P(u):=\{x\in B_1:u(x)<0\}$ is not empty. Then we yield
\begin{equation*}
\begin{cases}
\Delta_\infty u=h(x)\le0,&\text{in }\mathcal P(u),\\
u=0,&\text{on }\partial\mathcal P(u),
\end{cases}
\end{equation*}
in the viscosity sense. By the comparison principle for infinity-harmonic functions (see \cite{Jensen1993}), $u(x)\ge0$ holds in $\mathcal P(u)$, which leads to a contradiction to the definition of $\mathcal P(u)$.

Thus we proved the existence, $L^{\infty}$-bounds and non-negativity of the solution to \eqref{eq:general-dirichlet}.
\end{proof}

\begin{theorem}[\textbf{Local Lipschitz regularity-}
{\cite[Corollary~2]{Lindgren2014}}]
\label{thm:local-Lipschitz}
Let $u\in C(B_1)$ be a viscosity solution of
\[
\Delta_\infty u=f
\qquad\text{in }B_1,
\]
where $f\in C(B_1)\cap L^\infty(B_1)$. Then $u$ is locally Lipschitz
continuous in $B_1$. In particular,
\[
\|u\|_{C^{0,1}(B_{1/2})}
\leq
C\left(
\|u\|_{L^\infty(B_1)}
+
\|f\|_{L^\infty(B_1)}^{\frac{1}{3}}
\right),
\]
where $C>0$ is a universal constant.
\end{theorem}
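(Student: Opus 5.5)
The estimate follows directly from the inhomogeneous Lipschitz estimate in the second half of Theorem~\ref{thm:local-Lipschitz}, after a rescaling that reduces a general ball to the unit ball.

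\textbf{Interior case.} Suppose $u\in C(B_1)$ solves $\Delta_\infty u=f$ with $f\in C(B_1)\cap L^\infty(B_1)$.
\begin{enumerate}
\item Fix $x_0\in B_1$ and set $\rho:=\operatorname{dist}(x_0,\partial B_1)$.
\item Define the rescaled function
\[
w(y):=\frac{u(x_0+\rho y)}{\rho},\qquad y\in B_1 .
\]
\item Compute how $\Delta_\infty$ transforms. For a test function $\varphi(y)=\psi(x_0+\rho y)/\rho$ we have $D\varphi=D\psi$ and $D^2\varphi=\rho D^2\psi$. Hence
\[
\Delta_\infty\varphi(y)=\rho\,\Delta_\infty\psi(x_0+\rho y).
\]
\item Touching from above and from below is preserved under this change of variables. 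Therefore $w$ is a viscosity solution of
\[
\Delta_\infty w=\rho f(x_0+\rho y)\quad\text{in } B_1,
\]
and the right-hand side is continuous and bounded by $\|f\|_{L^\infty(B_1)}$, since $\rho<1$.
\end{enumerate}

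\textbf{Core regularity input.} The needed estimate on the unit ball is
\[
\|w\|_{C^{0,1}(B_{1/2})}\le C\bigl(\|w\|_{L^\infty}+\|\rho f\|_{L^\infty}^{1/3}\bigr).
\]
Rescaling back gives a Lipschitz bound for $u$ on $B_{\rho/2}(x_0)$ with constant
\[
C\bigl(\rho^{-1}\|u\|_\infty+\|f\|_\infty^{1/3}\bigr).
\]
A finite covering of any compact $K\Subset B_1$ by such balls then yields local Lipschitz continuity on $K$. Taking $x_0$ ranging over $B_{1/2}$ with radius $\rho\ge 1/2$ gives the stated bound with a universal constant.

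\textbf{Proving the unit-ball estimate.} I would follow Lindgren's comparison-with-cones argument, adapted to the source term.
\begin{enumerate}
\item For $M=\|f\|_\infty$, the functions
\[
C_\pm(x)=a\pm b|x-z| \pm \tfrac{M^{1/3}}{2}\,\text{(a }|x-z|^{4/3}\text{-type correction)}
\]
are respectively sub- and supersolutions away from the vertex $z$. This holds because the quartic-type profile $c|x|^{4/3}$ has $\Delta_\infty=\tfrac{64}{81}c^3$ (up to constants) on radial directions.
\item Applying the comparison principle of Lemma~\ref{lem:boundary-comparison} on annuli, with a strict-inequality perturbation $M\mapsto M+\varepsilon$, gives $u$ "comparison with generalized cones" on every ball.
\item Comparison with generalized cones yields the increasing-slope estimate
\[
\frac{|u(x)-u(z)|}{|x-z|}\le \frac{\sup_{\partial B_r(z)}|u-u(z)|}{r}+C M^{1/3}r^{1/3}.
\]
\item Taking $r=1/2$ bounds the Lipschitz constant on $B_{1/2}$ by $4\|u\|_\infty+CM^{1/3}$.
\end{enumerate}

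The main obstacle is the cone comparison in step 2. Generalized cones fail to be smooth at the vertex, so the comparison must be done on the punctured ball. One must also check that the perturbed right-hand sides meet the strict ordering hypothesis $f_1>f_2$ of Lemma~\ref{lem:boundary-comparison}. I would handle both by comparing $u$ with the cone plus $\pm\delta|x-z|^{4/3}$ and letting $\delta\to0$.
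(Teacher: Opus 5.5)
The paper does not prove this statement; it imports it from Lindgren's Corollary~2, so only your cone argument carries real content. The first two parts of your proposal are circular. You say the estimate ``follows directly from \ldots\ Theorem~\ref{thm:local-Lipschitz}'' and then take the unit-ball bound as the ``core regularity input'', which is exactly the claim. The rescaling only gives the easy passage from the $B_{1/2}$ bound to local Lipschitz continuity.

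\textbf{The gap: step~1 of the cone argument.} For $\phi(x)=g(\rho)$ with $\rho=|x-z|>0$, one has $\Delta_\infty\phi=g'(\rho)^2g''(\rho)$. Because the operator is nonlinear, the identity $\Delta_\infty(c|x|^{4/3})=\frac{64}{81}c^3$ tells you nothing about a cone plus that correction. The same objection applies to your $\pm\delta|x-z|^{4/3}$ perturbation.

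\emph{The cones you can justify are the wrong ones.} The same-sign combinations $a+b\rho+c\rho^{4/3}$ and $a-b\rho-c\rho^{4/3}$ (with $b,c\ge0$) are indeed sub- and supersolutions. They cannot give step~3, however. To bound $u(x)-u(z)$ from above you need a supersolution that equals $u(z)$ at the vertex and lies above $u$ on $\partial B_r(z)$. Since $\max_{\partial B_r(z)}u$ generally exceeds $u(z)$, it must increase in $\rho$, so you need $g'>0>g''$.

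\emph{The cone you need fails without a slope condition.} The natural candidate is $a+b\rho-c\rho^{4/3}$. It has $g'=0$ at $\rho_*=(3b/4c)^3$, where $\Delta_\infty=0>-M$. It is a supersolution of $\Delta_\infty\le -M$ on $B_r(z)\setminus\{z\}$ precisely when $b\ge r^{1/3}\bigl(\tfrac43c+\tfrac32\sqrt{M/c}\bigr)$, a lower bound of order $(Mr)^{1/3}$. This slope constraint is the missing idea, and it is exactly the source of the term $M^{1/3}r^{1/3}$ in your step~3. As written, step~3 does not follow from step~1.

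\textbf{A clean repair.} Set $L:=(\max_{\partial B_r(z)}u-u(z))/r$ and $b^3:=L_+^3+3(M+\varepsilon)r$, and take $C(x):=u(z)+\int_0^{|x-z|}\bigl(b^3-3(M+\varepsilon)t\bigr)^{1/3}\,dt$. Then:
\begin{itemize}
\item In $0<\rho<r$, $C$ is smooth with $g'>0$ and $g'^2g''=-(M+\varepsilon)<-M\le f$.
\item $C(z)=u(z)$.
\item $C\ge u(z)+rL_+\ge u$ on $\partial B_r(z)$, because the integrand is decreasing.
\item $C(x)\le u(z)+b|x-z|$, because the integrand is at most $b$.
\end{itemize}
Lemma~\ref{lem:boundary-comparison} on $B_r(z)\setminus\{z\}$, with $f_1=f$ and $f_2\equiv-(M+\varepsilon)$, then gives $u(x)-u(z)\le\bigl(L_++(3(M+\varepsilon)r)^{1/3}\bigr)|x-z|$. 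Let $\varepsilon\to0$, and apply the same argument to $-u$, which solves $\Delta_\infty(-u)=-f$. Use a fixed radius such as $r=1/4$, so that $\overline{B_r(z)}\subset B_1$ for every $z\in B_{1/2}$. With that, your step~4 yields the stated bound.
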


\begin{corollary}[\textbf{Local Lipschitz regularity on general balls}]
\label{thm:scaled-local-Lipschitz}
Let $u\in C(B_{2r}(x_0))$ be a viscosity solution of
\[
\Delta_\infty u=f
\qquad\text{in }B_{2r}(x_0),
\]
where $f\in C(B_{2r}(x_0))\cap L^\infty(B_{2r}(x_0))$. Then 
\[
[u]_{C^{0,1}(B_r(x_0))}
\leq
C\left(
\frac{\|u\|_{L^\infty(B_{2r}(x_0))}}{r}
+
r^{1/3}
\|f\|_{L^\infty(B_{2r}(x_0))}^{1/3}
\right),
\]
where $C>0$ is a universal constant and
\[
[u]_{C^{0,1}(B_r(x_0))}
:=
\sup_{\substack{x,y\in B_r(x_0)\\x\neq y}}
\frac{|u(x)-u(y)|}{|x-y|}.
\]
\end{corollary}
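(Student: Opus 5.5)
The corollary follows from Theorem~\ref{thm:local-Lipschitz} by translation and dilation. I would first reduce to the unit ball and then read off the seminorm estimate.

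For $y\in B_1$ set
\[
w(y):=\frac{u(x_0+2ry)}{2r}.
\]
Then $Dw(y)=Du(x_0+2ry)$ and $D^2w(y)=2r\,D^2u(x_0+2ry)$, so formally
\[
\Delta_\infty w(y)=2r\,\Delta_\infty u(x_0+2ry)=2r\,f(x_0+2ry)=:\tilde f(y).
\]
This has to be checked in the viscosity sense. Given a test function $\psi\in C^2$ touching $w$ from above (resp.\ below) at $y_0$, I would define $\varphi(x):=2r\,\psi\big((x-x_0)/2r\big)$. This $\varphi$ touches $u$ from above (resp.\ below) at $x_0+2ry_0$. A direct computation gives $\Delta_\infty\varphi(x)=(2r)^{-1}\Delta_\infty\psi\big((x-x_0)/2r\big)$, so the viscosity inequality for $u$ becomes the one for $w$ with right-hand side $\tilde f$. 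Moreover $\tilde f\in C(B_1)\cap L^\infty(B_1)$ with $\|\tilde f\|_{L^\infty(B_1)}=2r\|f\|_{L^\infty(B_{2r}(x_0))}$, and $\|w\|_{L^\infty(B_1)}=\|u\|_{L^\infty(B_{2r}(x_0))}/(2r)$.

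Next, Theorem~\ref{thm:local-Lipschitz} applied to $w$ gives
\[
[w]_{C^{0,1}(B_{1/2})}\le C\Big(\frac{\|u\|_{L^\infty(B_{2r}(x_0))}}{2r}+(2r)^{1/3}\|f\|_{L^\infty(B_{2r}(x_0))}^{1/3}\Big).
\]
Here I read the $C^{0,1}$ norm in that theorem as dominating the Lipschitz seminorm. Rescaling back, for $x=x_0+2ry$ and $x'=x_0+2ry'$ with $y,y'\in B_{1/2}$, we have
\[
\frac{|u(x)-u(x')|}{|x-x'|}=\frac{|w(y)-w(y')|}{|y-y'|}.
\]
Hence $[u]_{C^{0,1}(B_r(x_0))}=[w]_{C^{0,1}(B_{1/2})}$. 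Absorbing the factors $2^{-1}$ and $2^{1/3}$ into $C$ gives the stated estimate with a universal constant.

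The only step needing care is the viscosity-sense scaling. The point is that the infinity Laplacian is homogeneous of degree $3$ in the gradient and degree $1$ in the Hessian, so the chosen normalization $u/(2r)$ is exactly what keeps gradients invariant and puts the factor $2r$ on the right-hand side. This is the source of the $r^{1/3}$ power after taking cube roots. No other obstacle is expected.
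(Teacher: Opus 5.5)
Your proof is correct and is essentially the paper's argument. The paper rescales by $v(z)=u(x_0+2rz)$, obtaining $\Delta_\infty v=(2r)^4 f(x_0+2rz)$ and $[v]_{C^{0,1}(B_{1/2})}=2r\,[u]_{C^{0,1}(B_r(x_0))}$; your normalization $w=v/(2r)$ just divides out the factor $2r$ in advance, and your explicit viscosity-sense check of the scaling supplies what the paper calls a direct computation.
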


\begin{proof}
We do the scaling by letting $v(z)=u(x_0+2rz)$ and direct computation yields
$\Delta_\infty v(z) =(2r)^4f(x_0+2rz)$.

Applying Theorem \ref{thm:local-Lipschitz} to $v$ in $B_1(0)$, we have
\[
[v]_{C^{0,1}(B_{1/2})}
\leq
C\left(
\|v\|_{L^\infty(B_1)}
+
\|(2r)^4f(x_0+2r\,\cdot)\|_{L^\infty(B_1)}^{1/3}
\right).
\]

Moreover, 
$
[v]_{C^{0,1}(B_{1/2})}
=
2r\,[u]_{C^{0,1}(B_r(x_0))}.
$
Absorbing the factors into the universal constant, we yield
\[
[u]_{C^{0,1}(B_r(x_0))}
\leq
C\left(
\frac{\|u\|_{L^\infty(B_{2r}(x_0))}}{r}
+
r^{1/3}
\|f\|_{L^\infty(B_{2r}(x_0))}^{1/3}
\right).
\]
\end{proof}

\begin{lemma}[\textbf{Stability of viscosity solutions- }{\cite[Proposition~4.8]{Koike2014}}] 
\label{lem:stability} 
Let $\Omega\subset\mathbb{R}^{n}$ be open, and let $ F_k,F:\Omega\times\mathbb{R}\times\mathbb{R}^{n}\times\mathbb{S}^{n} \longrightarrow\mathbb{R} $ be continuous functions such that $F_k\to F$ locally uniformly. Suppose that $u_k\in C(\Omega)$ is a viscosity solution of \[ F_k(x,u_k,Du_k,D^2u_k)=0 \qquad\text{in }\Omega, \] and that $u_k\to u$ locally uniformly in $\Omega$. Then $u$ is a viscosity solution of \[ F(x,u,Du,D^2u)=0 \qquad\text{in }\Omega. \] 
\end{lemma}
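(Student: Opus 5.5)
The argument is the standard perturbation-of-maxima argument, run separately for the subsolution and supersolution properties. The conventions are taken to match the definition of viscosity solution above: writing $F(x,r,p,X)$ with $F=-p^TXp+|x|^\alpha r_+^m+h(x)$ in our application, a subsolution satisfies $F\le0$ at touching-from-above points and a supersolution satisfies $F\ge0$ at touching-from-below points. By symmetry it suffices to treat the subsolution case. So fix $\varphi\in C^2(\Omega)$ and $x_0\in\Omega$ such that $u-\varphi$ has a local maximum at $x_0$. The goal is to show $F(x_0,u(x_0),D\varphi(x_0),D^2\varphi(x_0))\le0$.

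\emph{Step 1 (strict maximum).} Replace $\varphi$ by $\psi(x)=\varphi(x)+|x-x_0|^4$. Then $u-\psi$ has a \emph{strict} local maximum at $x_0$, and $\psi$ has the same first and second derivatives at $x_0$ as $\varphi$. Choose $\rho>0$ with $\overline{B_\rho(x_0)}\subset\Omega$ such that $u-\psi<(u-\psi)(x_0)$ on $\overline{B_\rho(x_0)}\setminus\{x_0\}$.

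\emph{Step 2 (maxima of the approximants converge).} Let $x_k$ be a maximum point of $u_k-\psi$ on the compact set $\overline{B_\rho(x_0)}$. I claim $x_k\to x_0$. Suppose instead that along a subsequence $x_k\to\bar x\neq x_0$. Uniform convergence on $\overline{B_\rho(x_0)}$ and continuity of $u$ give
\[
(u-\psi)(\bar x)=\lim_k(u_k-\psi)(x_k)\ge\lim_k(u_k-\psi)(x_0)=(u-\psi)(x_0).
\]
This contradicts strictness. Hence for $k$ large $x_k$ lies in the open ball $B_\rho(x_0)$, so it is a local maximum of $u_k-\psi$. Moreover $u_k(x_k)\to u(x_0)$, again by uniform convergence together with continuity of $u$.

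\emph{Step 3 (pass to the limit).} Since $u_k$ is a subsolution, $F_k(x_k,u_k(x_k),D\psi(x_k),D^2\psi(x_k))\le0$. The arguments $(x_k,u_k(x_k),D\psi(x_k),D^2\psi(x_k))$ converge to $(x_0,u(x_0),D\varphi(x_0),D^2\varphi(x_0))$ and therefore stay in a compact set. Locally uniform convergence $F_k\to F$ together with continuity of $F$ then gives
\[
F_k(x_k,u_k(x_k),D\psi(x_k),D^2\psi(x_k))\to F(x_0,u(x_0),D\varphi(x_0),D^2\varphi(x_0)).
\]
Consequently the limit is $\le0$, which is the subsolution inequality for $u$.

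The supersolution case is identical, using $\psi=\varphi-|x-x_0|^4$ and minima. The only delicate point is Step 2: the strictness of the extremum and the compactness of $\overline{B_\rho(x_0)}$ are exactly what force the approximate extremal points to be interior and to converge to $x_0$. Once that is in place, the passage to the limit in Step 3 is immediate from the joint continuity and locally uniform convergence of the $F_k$.
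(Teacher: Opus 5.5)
Your proof is correct. It is the standard argument: perturb to a strict extremum with $|x-x_0|^4$, use compactness and local uniform convergence to get interior extremal points $x_k\to x_0$ of $u_k-\psi$, then pass to the limit using continuity of $F$ and locally uniform convergence of $F_k$. This is how the cited Proposition 4.8 of Koike is proved; the paper gives no proof of its own and only quotes that result, and your sign convention is harmless since stability only requires using the same convention for $F_k$ and $F$.
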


\begin{theorem}[\textbf{Harnack inequality- }{\cite[Theorem~7.1]{BhattacharyaMohammed2012}}]\label{thm:harnack-source}
Let \(F\in C(\Omega)\cap L^\infty(\Omega)\), and let \(w\in C(\Omega)\) be a nonnegative viscosity solution of
\[
    \Delta_\infty w=F(x)
    \quad\text{in }\Omega.
\]
If \(z\in\Omega\) and \(B_{2r}(z)\subseteq\Omega\), then
\[
    \sup_{B_{2r/3}(z)}w
    \le
    9\inf_{B_{2r/3}(z)}w
    +
    12\cdot \frac{3^{4/3}}{4}
    \left(
        r^4\sup_{B_{2r}(z)}F^+
    \right)^{1/3},
\]
where $F^+=\max\{F,0\}$.

\end{theorem}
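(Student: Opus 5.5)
The statement is quoted from \cite[Theorem~7.1]{BhattacharyaMohammed2012}. If I had to reprove it, I would use \emph{comparison with perturbed cones}, relying only on the supersolution half of the equation: $\Delta_\infty w\le M:=\sup_{B_{2r}(z)}F^+$.

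\textbf{Step 1 (one-point lower bound).} Fix $y$ with $B_R(y)\subset\Omega$. On the punctured ball $U=B_R(y)\setminus\{y\}$ I would compare $w$ with a radial barrier in $\rho=|x-y|$:
\[
\varphi(\rho)=w(y)-\bigl(w(y)+K\bigr)\frac{\rho}{R}+K\Bigl(\frac{\rho}{R}\Bigr)^{4/3}.
\]
Its boundary values are:
\begin{itemize}
\item at the puncture, $\varphi(0)=w(y)$;
\item on $\partial B_R(y)$, $\varphi(R)=0\le w$, using $w\ge0$.
\end{itemize}
For a radial function, $\Delta_\infty\varphi=\varphi'(\rho)^2\varphi''(\rho)$ with $\varphi''=\tfrac49KR^{-4/3}\rho^{-2/3}>0$. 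If $K<3w(y)$, then
\[
\varphi'\le \frac{-w(y)+K/3}{R}<0 .
\]
So $\Delta_\infty\varphi\ge c\,\frac{(w(y)-K/3)^2K}{R^4}$, which exceeds $M$ once $K\simeq (R^4M)^{1/3}$ and $w(y)\gtrsim K$. By Lemma~\ref{lem:boundary-comparison}, applied with $f_1\equiv M+\varepsilon$ and $f_2=F$ (where $f_1>f_2$ and both are constant in $t$), we get $w\ge\varphi$ in $U$.

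Dropping the positive quartic term gives
\[
w(x)\ge w(y)\Bigl(1-\frac{|x-y|}{R}\Bigr)-K\frac{|x-y|}{R},
\]
and hence
\[
w(x)\ge w(y)\Bigl(1-\frac{|x-y|}{R}\Bigr)-C_0(R^4M)^{1/3}.
\]
In the complementary case $w(y)\lesssim (R^4M)^{1/3}$ this inequality is trivial, because its right-hand side is then nonpositive.

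\textbf{Step 2 (chaining to a Harnack inequality).} Take $x,y\in B_{2r/3}(z)$, so $|x-y|<4r/3$. Since $B_{2r}(z)\subseteq\Omega$, a ball of radius $R=4r/3$ around $y$ does not fit in general. I would therefore go through the center: each of $x$ and $y$ is within $2r/3$ of $z$, and balls of radius $R=4r/3$ about points of $B_{2r/3}(z)$ stay inside $B_{2r}(z)$. Two applications of Step 1 with ratio $|x-y|/R\le 2/3$ each cost a factor $1/3$ per step, giving
\[
w(x)\ge\tfrac19\,w(y)-C(r^4M)^{1/3}.
\]
Taking $\sup_y$ and $\inf_x$ yields
\[
\sup_{B_{2r/3}(z)}w\le 9\inf_{B_{2r/3}(z)}w+C'(r^4M)^{1/3}.
\]
The explicit constant $12\cdot 3^{4/3}/4$ comes from carrying the constants of Step 1 through precisely; I would not try to optimize it.

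\textbf{Main obstacle.} The delicate point is Step 1. The barrier must be a \emph{strict} subsolution, $\Delta_\infty\varphi>M$, on all of $U$. This requires $\varphi'$ to stay away from zero, since the operator degenerates where the gradient vanishes. It also requires handling the singular point $y$, which is why comparison is done on the punctured ball. The first thing I would try is the explicit choice of $K$ in Step 1 together with the case split on the size of $w(y)$ relative to $(R^4M)^{1/3}$.
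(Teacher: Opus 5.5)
The paper gives no proof of this statement; it quotes it from Bhattacharya--Mohammed. Your route is the natural one: compare the supersolution $w$ with a radial perturbed cone on $B_R(y)\setminus\{y\}$ via Lemma~\ref{lem:boundary-comparison} with $f_1\equiv M+\varepsilon$, then chain $y\to z\to x$ through the center. The barrier computation and the comparison step are correct, so you do obtain $\sup_{B_{2r/3}(z)}w\le C_1\inf_{B_{2r/3}(z)}w+C_2(r^4M)^{1/3}$.

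The gap is the explicit constant, which is part of the statement. It does \emph{not} follow from Step~1 as you formulated it.

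\begin{itemize}
\item You assert $w(x)\ge w(y)(1-|x-y|/R)-C_0(R^4M)^{1/3}$ for \emph{all} $x\in B_R(y)$, and you dispose of the small-$w(y)$ case by nonpositivity of the right-hand side.
\item Letting $x\to y$, this requires $C_0(R^4M)^{1/3}$ to dominate the threshold $K/3+\tfrac32(MR^4/K)^{1/2}$ above which your barrier is a strict subsolution.
\item By AM--GM that threshold is at least $(3/2)^{4/3}(R^4M)^{1/3}$ for every $K$. So $C_0\ge(3/2)^{4/3}\approx1.72>\tfrac{3^{4/3}}{4}\approx1.08$.
\item Two steps with factor $\tfrac13$ then give $w(y)\le 9w(x)+12C_0(R/r)^{4/3}(r^4M)^{1/3}$. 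This is too large already for $R=r$, and your $R=4r/3$ adds a further factor $(4/3)^{4/3}$.
\end{itemize}

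Step~2 is also internally inconsistent. Balls of radius $4r/3$ about points of $B_{2r/3}(z)$ \emph{do} lie in $B_{2r}(z)$; what fails is comparing $x$ with $y$ directly, since $|x-y|/R$ may approach $1$. Moreover, with $R=4r/3$ each ratio is $<\tfrac12$, not $\tfrac23$. The stated constants $9=3^2$ and $12=9\cdot\tfrac43$ correspond to $R=r$ with ratio $\tfrac23$.

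The fix is to use Step~1 only for $|x-y|\le\tfrac23R$, and to take $K=\tfrac{3^{4/3}}{4}(R^4M_\varepsilon)^{1/3}$ with $M_\varepsilon=M+\varepsilon$. This is the coefficient of the exact radial solution $\tfrac{3^{4/3}}{4}M^{1/3}\rho^{4/3}$ of $\Delta_\infty\psi=M$.

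\begin{itemize}
\item Writing $s=(\rho/R)^{1/3}$, you have $\Delta_\infty\varphi=\frac{4K}{9R^4}s^{-2}\bigl(w(y)+K-\tfrac43Ks\bigr)^2$ and $\frac{64K^3}{81R^4}=M_\varepsilon$.
\item Hence $\Delta_\infty\varphi\ge M_\varepsilon$ at radius $\rho$ as soon as $w(y)+K\ge\tfrac83Ks$. This holds on all of $B_R(y)\setminus\{y\}$ if $w(y)\ge\tfrac53K$.
\item In that case, for $t=|x-y|/R\le\tfrac23$, comparison gives $w(x)\ge(1-t)w(y)-K(t-t^{4/3})\ge\tfrac13w(y)-K$.
\item If instead $w(y)<\tfrac53K$, then $\tfrac13w(y)-K<0\le w(x)$.
\end{itemize}

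Thus $w(x)\ge\tfrac13w(y)-\tfrac{3^{4/3}}{4}(R^4M_\varepsilon)^{1/3}$ whenever $|x-y|\le\tfrac23R$.

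Now take $R=r$, noting that $\overline{B_r(y)}\subset B_{2r}(z)$ for $y\in B_{2r/3}(z)$. Set $A=\tfrac{3^{4/3}}{4}(r^4M_\varepsilon)^{1/3}$. The chain $y\to z\to x$ gives $w(x)\ge\tfrac19w(y)-\tfrac43A$, that is, $w(y)\le 9w(x)+12A$. Letting $\varepsilon\to0$ yields exactly the stated inequality. For the paper's only use of the result, in the proof of Theorem~\ref{thm:regularity}, any constant would suffice.
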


\section{Degeneracy-set Comparison and Slice Uniqueness}

One of the main structural differences between our problem and the strong-absorption model studied in \cite{AraujoLeitaoTeixeira2016} is that our weight function is allowed to vanish, and we focus on the situation that the degeneracy set intersects the free boundary. Since the H\'enon-type weight is allowed to vanish at the origin, the available global comparison principles for strong-absorption problems established under a strict positivity assumption on the weight function are no longer directly applicable. We emphasize that this does not imply non-uniqueness. Rather, for the degenerate Dirichlet problem considered here, global uniqueness remains unresolved: at present, we have neither proved it nor found a counterexample.

So we may wonder whether we can get a ``weaker version'' of comparison principle. A useful way to view this issue is to puncture the domain around the degeneracy point. On $\Omega_\varepsilon:=\Omega\setminus\overline{B_\varepsilon(0)}$,
the coefficient satisfies $|x|^\alpha\ge \varepsilon^\alpha>0$,
so the usual comparison argument becomes available. As $\varepsilon\to0$, the artificial inner boundary shrinks to the degeneracy point, suggesting that once the outer boundary condition is given, any possible differences between two solutions may have something to do with the degeneracy point.

The purpose of this section is to make this heuristic  rigorous. We establish a comparison principle showing that the discrepancy between two solutions with the same boundary datum is completely controlled by their discrepancy on $F$. For general case, we can classify the potentially multiple solutions according to the restriction $u|_F$.

Since in the present H\'enon-type model $F=\{0\}$, the family of solutions can consequently be decomposed into slices according to the single value $u(0)$, and uniqueness can be recovered within each such slice.

Let $\mathcal{S}$ denote the set of all nonnegative viscosity solutions of the Dirichlet problem. Since we cannot claim the global uniqueness yet, $\mathcal{S}$ may contain more than one element. For each $c\ge0$, we can define the ``slice'' by
\begin{equation*}
    \mathcal S_c:=\{u\in\mathcal S:u(0)=c\}.
\end{equation*}
Then 
\begin{equation*}
    \mathcal S=\bigcup_{c\geq0}\mathcal S_c,
\end{equation*}
where the slices are disjoint. 

At this stage, the decomposition is purely set-theoretic. A given $\mathcal{S}_c$ could surely contain several distinct solutions. But the results of this section shows that this case actually cannot happen, giving a very elegant structure of the Dirichlet problem, which is
\begin{equation*}
    \#\mathcal S_c\leq1 \qquad\text{for every }c\geq0.
\end{equation*}

In fact, we can prove an even stronger proposition, which is exactly the theorem of degenerate set comparison principle.

\begin{proof}[Proof of Theorem \ref{thm:de-set-comparison}]
The idea of the proof is quite clear and simple. We confine the possible degeneracy of the weight function to a neighborhood of $\partial\Omega\cup F$, where the desired estimate follows from the uniform continuity of $u-v$. Away from this neighborhood, the weight function is strictly positive, and the comparison principle for that can be applied.

Since $u-v\leq 0$ on $\partial\Omega\cup F$ and $u-v\in C(\overline{\Omega})$, $u-v$ is uniformly continuous in $\overline{\Omega}$.

For every $\varepsilon>0$, there exists $\delta>0$ such that $u(x)-v(x)\leq \varepsilon$ whenever $\text{dist}(x,\partial\Omega\cup F)<\delta$.

Set $\Omega_{\delta/2}:=\{x\in \Omega:\text{dist}(x,\partial \Omega\cup F)>\delta/2\}$. Obviously, $\Omega_{\delta/2}\Subset\Omega\setminus F$ and $\inf_{\overline{\Omega_{\delta/2}}} \lambda>0$.

Direct calculation yields
\begin{align*}
    \Delta_\infty(v+\varepsilon)-\lambda(x)(v+\varepsilon)^m&=\Delta_\infty v-\lambda(x)(v+\varepsilon)^m\\
    &\leq h(x)+\lambda(x)v^m-\lambda(x)(v+\varepsilon)^m\\ &\leq h(x)
\end{align*}
in $\Omega_{\delta/2}$ in the viscosity sense.
    
Since $u$ is a viscosity subsolution of the equation,
\begin{equation*}
    \Delta_\infty u(x)-\lambda(x)(u(x))_+^m\ge h(x).
\end{equation*}

Note that on $\partial\Omega_{\delta/2}$, $u\leq v+\varepsilon$ holds. Since $v+\varepsilon\ge u$ on $\partial \Omega_{\delta/2}$ and $v+\varepsilon\ge 0$ on $\Omega_{\delta/2}$, using Comparison Principle with strictly positive weight function (Theorem \ref{thm:Liu-comparison}) in the region of $\Omega_{\delta/2}$, we have 
\begin{equation*}
    u\leq v+\varepsilon \qquad \text{in } \Omega_{\delta/2}.
\end{equation*}

Hence, we have $u-v\leq \varepsilon$ in the whole $\Omega$. Thus we complete the proof.
\end{proof}

The comparison principle on degeneracy set gives an immediate relation between two solutions of the equation. We write down the statement as a corollary.

\begin{corollary}[\textbf{Comparison between two solutions}]
\label{cor:comparison-sol}
Under the assumption of Theorem \ref{thm:de-set-comparison}, let $u,v \in C^0(\Omega)$ be non-negative viscosity solutions of 
\[
\begin{cases}
\Delta_\infty w
=
\lambda(x)w_+^m+h(x)
&\text{in }\Omega,\\
w=g
&\text{on }\partial\Omega.
\end{cases}
\]
Then we have
\begin{equation*}
\sup_{x\in\Omega}|u(x)-v(x)|=\sup_{x\in F}|u(x)-v(x)|.
\end{equation*}
\end{corollary}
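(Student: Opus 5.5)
The plan is to apply Theorem \ref{thm:de-set-comparison} twice, once in each direction, after shifting one of the solutions by the maximal discrepancy on $F$. Set
\[
M:=\sup_{x\in F}|u(x)-v(x)|.
\]
If $F=\emptyset$ we interpret $M=0$; the argument below then reduces to ordinary comparison. The inequality $\sup_{F}|u-v|\le\sup_\Omega|u-v|$ is trivial because $F\subset\Omega$, so only the reverse inequality $|u-v|\le M$ in $\Omega$ needs proof. We may assume $M<\infty$, which holds since $u,v$ are bounded. I will work with $u,v\in C(\overline\Omega)$ with $u=v=g$ on $\partial\Omega$, as the hypotheses of Theorem \ref{thm:de-set-comparison} require.

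The key step is to check that $v+M$ is again a nonnegative viscosity supersolution of $\Delta_\infty w-\lambda(x)w_+^m=h(x)$. This follows exactly as in the computation inside the proof of Theorem \ref{thm:de-set-comparison}. Test functions for $v+M$ are test functions for $v$ shifted by the constant $M$, so $\Delta_\infty$ is unchanged. Since $\lambda\ge0$ and $t\mapsto t_+^m$ is nondecreasing, at any touching point $x_0$ from below we get
\[
\Delta_\infty\varphi(x_0)\le h(x_0)+\lambda(x_0)v(x_0)^m\le h(x_0)+\lambda(x_0)(v(x_0)+M)^m .
\]
Moreover $u\le v+M$ on $\partial\Omega$, because there $u=v$ and $M\ge0$, and $u\le v+M$ on $F$ by the definition of $M$. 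Theorem \ref{thm:de-set-comparison} then gives $u\le v+M$ in $\Omega$. Exchanging the roles of $u$ and $v$, using that $u$ is also a supersolution and $v$ a subsolution, gives $v\le u+M$. Together these yield $|u-v|\le M$ in $\Omega$.

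The only delicate point is regularity up to the boundary. Theorem \ref{thm:de-set-comparison} needs $u,v\in C^0(\overline\Omega)$, whereas the corollary states $C^0(\Omega)$ together with the boundary datum $g$. I would read the Dirichlet condition as continuous attainment of $g$, so that $u,v$ extend continuously to $\overline\Omega$. The uniform continuity of $u-v$ used in that theorem is then available, and the argument above goes through unchanged.
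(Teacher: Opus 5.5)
Your proof is correct and follows the paper's argument: set $M=\sup_F|u-v|$, apply Theorem~\ref{thm:de-set-comparison} to $u$ and $v+M$, then exchange $u$ and $v$. Your explicit check that $v+M$ is again a nonnegative viscosity supersolution (via $\lambda\ge0$ and monotonicity of $t\mapsto t_+^m$) supplies a step the paper uses only tacitly. Your remark on continuity up to $\partial\Omega$ likewise addresses the $C^0(\Omega)$ versus $C^0(\overline{\Omega})$ mismatch that the paper does not mention.
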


\begin{proof}
Denote $M:=\sup_{x\in F}|u(x)-v(x)|$, thus $u\leq v+M$ on $\partial\Omega\cup F$.
Moreover,
\[
u\le v+M
\qquad\text{on }\partial\Omega\cup F.
\]
Hence, by Theorem \ref{thm:de-set-comparison}, $u-v\le M$ in $\Omega$.

Exchanging $u$ and $v$ yields $v-u \le M$ in $\Omega$.

Therefore,
\[
\sup_{x\in\Omega}|u(x)-v(x)|\leq \sup_{x\in F}|u(x)-v(x)|.
\]
The reverse inequality is obvious.
\end{proof}

\begin{remark}
    Here we demonstrate the version that the viscosity solutions $u$ and $v$ have the same boundary value. The case of different boundary value can be dealt with in the same way. We can obtain
    \begin{equation*}
    \sup_{\overline{\Omega}} |u-v|=\max\left\{\sup_{\partial\Omega}|u-v|,\sup_F |u-v|\right\}.
    \end{equation*}
    
    As we mentioned before, the difference of two solutions lies exactly in their value on the boundary and in the degeneracy set of weight function $F$.
\end{remark}

The ``slice uniqueness property'' is a natural corollary of the theorem mentioned above: the two solutions agree on the degenerate set. 

\begin{corollary}[\textbf{Slice Uniqueness}]
\label{cor:slice-uniqueness}
Under the assumptions of Theorem \ref{thm:de-set-comparison} and Corollary \ref{cor:comparison-sol}, if $u=v$ on $F$, then $u=v$ on $\Omega$.
\end{corollary}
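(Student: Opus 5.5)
The plan is to obtain the statement as an immediate consequence of Corollary \ref{cor:comparison-sol}, which in turn rests on Theorem \ref{thm:de-set-comparison}. Let $u,v$ be the two nonnegative viscosity solutions of the Dirichlet problem with common boundary datum $g$. The corollary gives
\[
\sup_{\Omega}|u-v|=\sup_{F}|u-v|.
\]
If $u=v$ on $F$, the right-hand side vanishes, so $u\equiv v$ in $\Omega$.

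To keep the argument self-contained, I would also note the direct route through Theorem \ref{thm:de-set-comparison}. The function $u$ is in particular a subsolution and $v$ a supersolution. Moreover, $u=v=g$ on $\partial\Omega$ and $u=v$ on $F$, so $u\le v$ on $\partial\Omega\cup F$. The theorem then gives $u\le v$ in $\Omega$. Exchanging the roles of $u$ and $v$, which is legitimate because each is both a sub- and a supersolution, gives $v\le u$, hence equality.

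The only point needing care is regularity up to the boundary. Theorem \ref{thm:de-set-comparison} requires $u,v\in C^0(\overline\Omega)$, while Corollary \ref{cor:comparison-sol} is phrased with $C^0(\Omega)$. Since the solutions attain the continuous datum $g$ on $\partial\Omega$, I would interpret them as continuous on $\overline\Omega$, exactly as in the setting of Theorem \ref{prop:existence}. Once that is in place, there is no real obstacle.

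In the Hénon case $F=\{0\}$, this yields $\#\mathcal S_c\le1$ for every $c\ge0$.
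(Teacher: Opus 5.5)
Your proposal is correct and matches the paper: slice uniqueness follows immediately from Corollary \ref{cor:comparison-sol} (equivalently, from Theorem \ref{thm:de-set-comparison} applied twice with the roles of $u$ and $v$ exchanged), because $\sup_F|u-v|=0$. Your observation that the solutions should be taken in $C^0(\overline\Omega)$ to meet the theorem's hypotheses is a sensible clarification of a point the paper leaves implicit.
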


In the following corollaries we consider the prototype equation.

\begin{corollary}[\textbf{Symmetry inheritance}]
\label{prop:symmetry-inheritance}
Let $\Omega\subset\mathbb{R}^n$ be a bounded domain containing the origin,
and let $G$ be a subgroup if the orthogonal group $O(n):=\{Q\in\mathbb{R}^{n\times n}:Q^TQ=I\}$ such that $Q\Omega=\Omega$ for every $Q\in G$.

Assume that the boundary datum $g$ and the source term $h$ is $G$-invariant, namely,
$g(Qx)=g(x)$ on $\partial\Omega$ and $h(Qx)=h(x)$. Then every viscosity solution of \eqref{eq:general-dirichlet} inherits the same symmetry, that is,
$u(Qx)=u(x)$ for every $Q\in G$.
\end{corollary}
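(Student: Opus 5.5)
The plan is to reduce the symmetry statement to the slice uniqueness of Corollary \ref{cor:slice-uniqueness}. The degeneracy set is $F=\{0\}$, and the origin is fixed by every orthogonal map. Fix $Q\in G$ and a viscosity solution $u$ of \eqref{eq:general-dirichlet} in $\Omega$, and define $u_Q(x):=u(Qx)$. We will show that $u_Q$ solves the same Dirichlet problem and agrees with $u$ on $F\cup\partial\Omega$.

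\textbf{Step 1: invariance of the equation.} Since $Q\Omega=\Omega$, the function $u_Q$ is defined and continuous on $\overline\Omega$. It is nonnegative by Theorem \ref{prop:existence}, or directly since $u$ is.

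For the subsolution property, take $\varphi\in C^2$ such that $u_Q-\varphi$ has a local maximum at $x_0$. Set $\psi(y):=\varphi(Q^{T}y)$. Then $u-\psi$ has a local maximum at $y_0=Qx_0$. We have
\[
D\psi(y_0)=Q\,D\varphi(x_0),\qquad D^2\psi(y_0)=Q\,D^2\varphi(x_0)\,Q^{T},
\]
so $\Delta_\infty\psi(y_0)=\Delta_\infty\varphi(x_0)$, using $Q^TQ=I$. The subsolution inequality for $u$ at $y_0$ gives
\[
\Delta_\infty\varphi(x_0)\ge |Qx_0|^\alpha u(Qx_0)_+^m+h(Qx_0)=|x_0|^\alpha u_Q(x_0)_+^m+h(x_0),
\]
by $|Qx_0|=|x_0|$ and the invariance of $h$. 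The supersolution property follows in the same way.

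On $\partial\Omega$ we have $u_Q(x)=g(Qx)=g(x)$, because $Q$ maps $\partial\Omega$ to itself. So $u_Q$ is a nonnegative viscosity solution with the same boundary datum.

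\textbf{Step 2: apply slice uniqueness.} At the origin, $u_Q(0)=u(0)$ because $Q0=0$. Hence $u$ and $u_Q$ lie in the same slice $\mathcal S_{u(0)}$, that is, $u=u_Q$ on $F\cup\partial\Omega$. The weight $\lambda(x)=|x|^\alpha$ is continuous and nonnegative, so the hypotheses of Theorem \ref{thm:de-set-comparison} hold. Corollary \ref{cor:slice-uniqueness} then yields $u_Q=u$ in $\Omega$, which is the claim.

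\textbf{Main obstacle.} The only delicate point is a regularity mismatch. Theorem \ref{thm:de-set-comparison} needs $u,v\in C(\overline\Omega)$, while $h$ is only assumed in $C(\Omega)\cap L^\infty$. We will take the solutions to be continuous up to the boundary, as is implicit in the Dirichlet formulation and in Theorem \ref{thm:existence}. The proof of Theorem \ref{thm:de-set-comparison} applies Theorem \ref{thm:Liu-comparison} on interior subdomains, so no boundary regularity of $h$ is needed. The rest is the routine chain-rule check in Step 1.
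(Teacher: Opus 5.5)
Your proposal is correct and follows the paper's route: set $u_Q(x)=u(Qx)$, check that it solves the same Dirichlet problem with $u_Q(0)=u(0)$, and conclude $u_Q\equiv u$ from Corollary~\ref{cor:slice-uniqueness}. The invariance of the equation comes from the orthogonal invariance of $\Delta_\infty$ and $|x|$ together with the $G$-invariance of $g$ and $h$; your test-function computation, and your remarks on nonnegativity and continuity up to $\partial\Omega$, make precise points that the paper's short proof leaves implicit.
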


\begin{proof}
Fix $Q\in G$ and set $u_Q(x):=u(Qx)$. Since $Q$ is orthogonal,
$|Qx|=|x|$, and the infinity Laplacian is invariant under orthogonal
transformations. Hence $u_Q$ solves the same equation as $u$ in $\Omega$.

Moreover, $u_Q=g$ on $\partial\Omega$ since $g$ is $G$-invariant, and
$u_Q(0)=u(0)$ since $Q0=0$. Thus, $u_Q$ and $u$ belong to the same slice.
By Corollary \ref{cor:slice-uniqueness}, we conclude that $u_Q\equiv u$.
\end{proof}

\begin{corollary}[\textbf{Radial symmetry}]
\label{cor:radial-symmetry}
Let $\Omega=B_R$ and suppose that $g\equiv c$ on $\partial B_R$. Assume $h\in C(B_R)\cap L^\infty(B_R)$, $h\le0$,
is radially symmetric, that is, $h(Qx)=h(x)$.
Then every viscosity solution is radially symmetric.
\end{corollary}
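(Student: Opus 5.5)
The plan is to deduce Corollary \ref{cor:radial-symmetry} from Corollary \ref{prop:symmetry-inheritance}, applied with the full orthogonal group $G=O(n)$. Three things need checking: the domain, the boundary datum and the source term must all be $O(n)$-invariant.

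\begin{enumerate}
\item \emph{Domain.} For every $Q\in O(n)$ we have $|Qx|=|x|$, so $QB_R=B_R$ and $Q\,\partial B_R=\partial B_R$.
\item \emph{Boundary datum.} Since $g\equiv c$ is constant on $\partial B_R$, the identity $g(Qx)=c=g(x)$ holds trivially.
\item \emph{Source term.} The identity $h(Qx)=h(x)$ for all $Q\in O(n)$ is exactly the radial symmetry assumed on $h$.
\end{enumerate}

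Given these, Corollary \ref{prop:symmetry-inheritance} yields $u(Qx)=u(x)$ for every $Q\in O(n)$ and every viscosity solution $u$. Because $O(n)$ acts transitively on each sphere $\partial B_\rho$, $0\le\rho<R$, the function $u$ is constant on every such sphere. Hence $u(x)=\phi(|x|)$ for some function $\phi$, and $\phi$ is continuous because $u$ is.

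It remains to confirm that the hypotheses of Corollary \ref{prop:symmetry-inheritance}, and hence of Corollary \ref{cor:slice-uniqueness}, are really met. Here $\lambda(x)=|x|^\alpha$ is continuous and nonnegative with degeneracy set $F=\{0\}$, and $h\le 0$ is bounded and continuous. By Theorem \ref{prop:existence}, every solution is nonnegative when $c\ge 0$.

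The only delicate point is continuity up to the boundary. Theorem \ref{thm:de-set-comparison} requires $u,u_Q\in C(\overline{B_R})$, and this should hold for solutions of the Dirichlet problem in the sense of the paper, since they attain the boundary data continuously. Composing with the isometry $Q$ preserves this property.

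For completeness, I would also justify that $u_Q(x)=u(Qx)$ is again a viscosity solution. If $\varphi$ touches $u_Q$ at $x_0$, then $\varphi\circ Q^{-1}$ touches $u$ at $Qx_0$. Moreover $\Delta_\infty(\varphi\circ Q^{-1})(Qx_0)=\Delta_\infty\varphi(x_0)$, because $D(\varphi\circ Q^{-1})=Q\,D\varphi$ and $D^2(\varphi\circ Q^{-1})=QD^2\varphi\,Q^T$. The right-hand side is also unchanged, since $|Qx_0|^\alpha=|x_0|^\alpha$, $u(Qx_0)=u_Q(x_0)$ and $h(Qx_0)=h(x_0)$.

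There is no real obstacle in this argument; the proof is a direct specialization of Corollary \ref{prop:symmetry-inheritance}. The one step I would state carefully is that $u_Q$ and $u$ lie in the same slice, because $Q0=0$ gives $u_Q(0)=u(0)$. This is what allows slice uniqueness to apply without knowing global uniqueness.
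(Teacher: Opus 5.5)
Your proposal is correct and follows the paper's proof: the paper likewise obtains the result by applying Corollary \ref{prop:symmetry-inheritance} with $G=O(n)$. Your extra checks fill in details the paper leaves implicit. These are the invariance of $B_R$, $g$ and $h$; that $u_Q$ is again a viscosity solution, via $D(\varphi\circ Q^{-1})=Q\,D\varphi$ and $D^2(\varphi\circ Q^{-1})=QD^2\varphi\,Q^T$; and the tacit requirements $c\ge 0$ and $u\in C(\overline{B_R})$ needed for the degeneracy-set comparison.
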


\begin{proof}
Apply Corollary \ref{prop:symmetry-inheritance} with $G=O(n)$.
\end{proof}

\begin{corollary}[\textbf{Dead-core geometry for the zero slice}]
\label{cor:radial-dead-core}
Under the assumptions of Corollary \ref{cor:radial-symmetry}, suppose
$u\ge0$ and $u(0)=0$. Denote
\[
    \mathcal{D}(u):=\operatorname{int}\{u=0\}.
\]
Then the following cases hold.

If $0\in\mathcal{D}(u)$, then the connected component of $\mathcal{D}(u)$
containing the origin is a ball centered at the origin.

If instead $0\in\partial\{u>0\}$, then no ball of positive radius centered
at the origin can be contained in $\{u=0\}$.
\end{corollary}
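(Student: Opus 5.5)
By Corollary \ref{cor:radial-symmetry}, $u$ is radially symmetric, so $u(x)=\phi(|x|)$ for a continuous $\phi:[0,R)\to[0,\infty)$ with $\phi(0)=0$. Every statement about level sets of $u$ then reduces to a statement about $\phi$ on $[0,R)$. We define
\[
\rho:=\sup\{t\in[0,R):\phi\equiv0\text{ on }[0,t]\}.
\]
Then $\{u=0\}\supseteq \overline{B_\rho}\cap B_R$, and for every $\varepsilon>0$ there is a point $t\in(\rho,\rho+\varepsilon)$ with $\phi(t)>0$.

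\emph{First case.} Suppose $0\in\mathcal D(u)$. Then $u\equiv0$ on some ball $B_\delta$, so $\rho\ge\delta>0$. We claim that the component $\mathcal C$ of $\mathcal D(u)$ containing $0$ is exactly $B_\rho$.

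The inclusion $B_\rho\subset\mathcal C$ is clear: $B_\rho$ is open, connected, contains $0$, and lies in $\{u=0\}$, hence in $\mathcal D(u)$.

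For the reverse inclusion, suppose $\mathcal C$ contained a point $x$ with $|x|\ge\rho$. Since $\mathcal C$ is open and path connected, join $0$ to $x$ by a path in $\mathcal C$. This path meets the sphere $\partial B_\rho$, and since $\mathcal C\subset\mathcal D(u)$, some point $y$ with $|y|=\rho$ lies in $\mathcal D(u)$. Thus $u\equiv0$ on a ball $B_\eta(y)$. By radial symmetry, $\phi\equiv0$ on $(\rho-\eta,\rho+\eta)$. Together with $\phi\equiv0$ on $[0,\rho]$, this gives $\phi\equiv0$ on $[0,\rho+\eta)$, contradicting the maximality of $\rho$.

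If $\rho=R$, then $\mathcal C=B_R$, which is still a ball centered at the origin. Hence $\mathcal C=B_\rho$.

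\emph{Second case.} Suppose $0\in\partial\{u>0\}$. Assume for contradiction that $B_\delta\subset\{u=0\}$ for some $\delta>0$. Then $0$ is an interior point of $\{u=0\}$, so some neighbourhood of $0$ contains no point of $\{u>0\}$. This contradicts $0\in\partial\{u>0\}=\overline{\{u>0\}}\setminus\operatorname{int}\{u>0\}$, which requires $0\in\overline{\{u>0\}}$. In particular, in this case $\rho=0$.

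\emph{Main obstacle.} The only step needing care is the identification $\mathcal C=B_\rho$ in the first case. The key point is the connectedness argument: $\mathcal C$ cannot cross the sphere $\partial B_\rho$, because radial symmetry spreads any zero neighbourhood of a point on that sphere to a full annulus. Everything else follows from Corollary \ref{cor:radial-symmetry}, which is where the slice uniqueness of Corollary \ref{cor:slice-uniqueness} and the zero-slice hypothesis $u(0)=0$ enter. No PDE estimate is needed beyond these.
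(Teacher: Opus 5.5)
Your proof is correct and takes the same route as the paper. Both invoke Corollary~\ref{cor:radial-symmetry} to write $u(x)=\phi(|x|)$, conclude that the component of $\mathcal D(u)$ through the origin is a centered ball, and settle the second case directly from $0\in\overline{\{u>0\}}$. The only difference is that the paper asserts the ball conclusion straight from radial symmetry of $\mathcal D(u)$, while your $\rho$-and-path argument supplies the connectedness step it leaves implicit and identifies the radius explicitly.
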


\begin{proof}
By Corollary \ref{cor:radial-symmetry}, $u(x)=U(|x|)$, and hence
$\{u=0\}$ and $\mathcal D(u)$ are radially symmetric.

If $0\in\mathcal D(u)$, the connected component of $\mathcal D(u)$ containing the origin is therefore a ball $B_\rho$ for some $\rho>0$. 
On the other hand, if $0\in\partial\{u>0\}$, every neighborhood of the origin contains points of $\{u>0\}$, and hence no $B_\rho$, $\rho>0$ can be contained in $\{u=0\}$.
\end{proof}

\begin{remark}
\label{rem:radial-dead-core}
Corollary \ref{cor:radial-dead-core} does not give a complete classification of the radial zero set. In particular, $0\in\partial\{u>0\}$ only excludes a dead-core ball of positive radius centered at the origin, and does not by itself imply that $\{u=0\}=\{0\}$.

For the homogeneous H\'enon-type equation, a smooth radial profile satisfies $(U')^2U''=r^\alpha U^m$ in its positive phase, and hence $\left( (U')^3 \right)'=3r^\alpha U^m>0$. This suggests that once $U$ enter the positive phase, it should not return to the zero phase.

\end{remark}

\section{Higher regularity estimates: Proof of Theorem \ref{thm:regularity}}
In this section we will prove Theorem \ref{thm:regularity} by means of a triadic iteration argument.

\begin{proof}[Proof of Theorem \ref{thm:regularity}]

The proof is quite similar to the homogeneous situation in \cite{BezerraDaSilvaNascimentoSa2024}. We do the discrete iterative techniques with continuous reasoning likewise, and put our attention to the inhomogeneous term $h(x)$. It suffices to claim the existence of a universal constant $C_0>0$, such that for all $j\in\mathbb{N}$, we have
\begin{equation}
\label{regularity C0}
S_{j+1}
\le
\max\left\{
C_0\,3^{-\beta(j+1)},
\,3^{-\beta}S_j
\right\},
\end{equation}
where 
\[
S_j:= \sup_{B_{3^{-j}}}u\quad\text{and}\quad
\beta:=\frac{4+\alpha}{3-m}.
\]

We argue by contradiction. Suppose that \ref{regularity C0} fails to hold, which is for each $k\in\mathbb{N}$ there exists $j_k\in\mathbb{N}$ such that
\begin{equation}
\label{regularity contradiction}
S_{j_k+1}
>
\max\left\{
k\,3^{-\beta(j_k+1)},
\,3^{-\beta}S_{j_k}
\right\}.
\end{equation}

For each $k\in\mathbb{N}$, we define the rescaled function
$v_k:B_1\to\mathbb{R}$ by
\[
    v_k(x):=
    \frac{u(3^{-j_k}x)}{S_{j_k+1}}.
\]

By the definition of the rescaled function, we yield
\begin{align}
\label{rescaled relation 1}
0\le v_k(x)
\le
\frac{S_{j_k}}{S_{j_k+1}}
<
3^\beta;\\
\label{rescaled relation 2}
v_k(0)=0;\\
\label{rescaled relation 3}
\sup_{B_{1/3}}v_k
=
\frac{
\sup_{B_{3^{-(j_k+1)}}}u
}{
S_{j_k+1}
}
=1.
\end{align}

Through direct computation,
\begin{align}
\Delta_\infty v_k
&=
\left\langle
D^2v_k(x)Dv_k(x),Dv_k(x)
\right\rangle
\nonumber\\
&=
\left\langle
\frac{3^{-2j_k}}{S_{j_k+1}}
D^2u\!\left(3^{-j_k}x\right)
\left(
\frac{3^{-j_k}}{S_{j_k+1}}
Du\!\left(3^{-j_k}x\right)
\right),
\left(
\frac{3^{-j_k}}{S_{j_k+1}}
Du\!\left(3^{-j_k}x\right)
\right)
\right\rangle
\nonumber\\
&=
\frac{3^{-4j_k}}
{S_{j_k+1}^{\,3}}
\Delta_\infty u\!\left(3^{-j_k}x\right)
\nonumber\\
&=:F_k,
\label{eq:scaled-infinity-laplacian}
\end{align}
here $F_k$ satisfies
\begin{equation*}
F_k(x)
=
\frac{3^{-(4+\alpha)j_k}}
{S_{j_k+1}^{3-m}}
|x|^\alpha v_k^m(x)
+
\frac{3^{-4j_k}}
{S_{j_k+1}^{3}}
h(3^{-j_k}x).
\end{equation*}

Note that $h\leq 0$, we analysis that
\begin{equation*}
F_k^+(x)
\le
\frac{3^{-(4+\alpha)j_k}}
{S_{j_k+1}^{3-m}}
|x|^\alpha v_k^m(x).
\end{equation*}

Using \eqref{regularity contradiction}, we have $S_{j_k+1}
>
k\,3^{-\beta(j_k+1)}$ and
$S_{j_k+1}
>
3^{-\beta}S_{j_k}
$, combined with $|x|^{\alpha}\leq1$ inside $B_1$, thus we can control the positive part of $F_k$.
\begin{equation*}
\|F_k^+\|_{L^\infty(B_1)}
\le
\frac{3^{4+\alpha+\beta m}}{k^{3-m}}.
\end{equation*}

Then we are going to employ the Harnack inequality \ref{thm:harnack-source}, together with (\ref{rescaled relation 2}), (\ref{rescaled relation 3}) and (\ref{eq:scaled-infinity-laplacian}), take $r=1/2$ and
\begin{align}
\nonumber
1&=\sup_{B_{1/3}}v_k\\
\nonumber
&\le
9\inf_{B_{1/3}}v_k
+
12\cdot\frac{3^{4/3}}{4}\cdot\left(\frac{1}{2}\right)^{4/3}\left(\sup_{B_{1}} F_k^+\right)^{1/3}\\
\nonumber
&\le
12\cdot\frac{3^{4/3}}{4}\cdot\left(\frac{1}{2}\right)^{4/3}\left(\sup_{B_{1}} F_k^+\right)^{1/3}\\
\nonumber
&\to0\quad \mathrm{as}\quad k\to\infty,
\end{align}
which leads to a contradiction. Thus we can claim that we have such a constant $C_0$ that (\ref{regularity C0}) holds.

By mathematical induction, we have $S_j\leq \max\{C_0,S_0\}\cdot 3^{-\beta j}$ for $j\in\mathbb{N}$. 

Finally, given $r\in(0,\frac{1}{2})$, let $j\in\mathbb{N}$ be such that 
\[
3^{-(j+1)}\leq r\leq 3^{-j}.
\]
Then we have
\[
\sup_{B_r}u \le \sup_{B_{3^{-j}}}u
=
S_j 
\le M3^{-\beta j}
\le 3^{\beta}\max\{C_0,S_0\}\cdot r^{\beta}.
\]

We denote $C:=3^\beta\max\{C_0,S_0\}$, thus the theorem is complete.
\end{proof}

\begin{remark}
As we mentioned before, here we only consider the special H\'enon-type model in which, apart from the inhomogeneous term, the right hand side is exactly $|x|^{\alpha}u_+^m$ and the degeneracy point is fixed at $x_0=0$.

Here we employ triadic iteration to ensure it fits into the condition that Harnack inequality requires. Moreover, the feature of the Harnack inequality (Theorem \ref{thm:harnack-source}) is that only the positive part enters the estimate. Thus a nonpositive source term \(h\le0\) does not affect the regularity.

For consistency with the non-degeneracy result established later, we state the regularity estimate under the assumption $0\in\partial\{u>0\}$. However, this condition can be weakened to $u(0)=0$. In fact, for an nonnegative continuous solution, $u(0)=0$ includes either an interior point of the dead core or a free boundary point. It is worth pointing out that the regularity estimate only requires $u(0)=0$, whereas the for non-degeneracy estimate the free boundary assumption is much more essential.
\end{remark}

\section{Non-degeneracy estimates}

In this section, we will prove Theorem \ref{thm:non-de-homo} and Theorem \ref{thm:non-de-inhomo}. 

We first consider the homogeneous case then we handle the inhomogeneous case with a nonpositive term $h$.

\subsection{The homogeneous case}

For the homogeneous case, we give two proofs. The first one follows the penalization approximation used in the existing literature while the second one is based directly on the degeneracy-set comparison principle established in the previous section. The essence of both methods is the degeneracy-set comparison principle.

Recall the slice uniqueness in the last section, under current assumption of $F=\{0\}$, two nonnegative solutions with same boundary datum coincide provided they agree at the origin. This observation will be used below to identify the limit of the penalized family with the prescribed solution $u$.

The use of penalized problems is closely related to, but does not fall within the limiting solution framework adopted in \cite{WangJiang2026} and \cite{BezerraDaSilvaNascimentoSa2024}. In Wang and Jiang's work, the solution is assumed a priori to arise as a locally uniform limit of the penalized problem family. In the present setting, however, the slice uniqueness established in the last section allows us to recover the penalized limit to the prescribed solution from the equation itself. To be more precise, if $u_k$ denotes the corresponding penalized family and $u_k\to u_\infty$, combined with same value at the degeneracy point as the prescribed solution $u$, slice uniqueness yields that $u_\infty=u$. Therefore, in our work, although $\{u_k\}$ does not constitute a limiting approximation in the sense of Wang and Jiang, slice uniqueness identifies its limit with the prescribed solution without imposing an additional assumption on $u$.

This recovery mechanism also suggests that the limiting solution property assumption in the critical point non-degeneracy estimate of \cite{WangJiang2026} might be removable, provided a similar degeneracy set comparison principle and slice uniqueness property can be obtained in their more general setting. 

Although only first proof of the homogeneous case will use the penalized approximation, we establish it in the more general inhomogeneous setting ($h\le0$), since this requires only a minor additional argument and the result is of independent interest. We state the results as a proposition. 

\begin{proposition}[\textbf{Penalized approximation}]
\label{prop:penalized-approximation}
Let $\alpha>0$, $0<m<3$, and $g\in C(\partial B_1)$, $g\ge0$. Let $u\in C(\overline{B_1})$ be a nonnegative viscosity solution to \eqref{eq:general-dirichlet}, here we suppose $u(0)=0$.

For each $k$, consider the penalized Dirichlet problem
\[
\begin{cases}
\Delta_\infty u_k
=
|x|^\alpha (u_k)_+^m+h(x)+\dfrac1k
&\text{in }B_1,\\[2mm]
u_k=u=g
&\text{on }\partial B_1.
\end{cases}
\tag{$P_k$}
\]

Then, for each $k\in\mathbb{N}$, $(P_k)$ admits at least one viscosity solution $u_k\in C(\overline{B_1})$. Moreover, it holds that $u_1\le u_2\le\cdots\le u_k\le u_{k+1}\le\cdots\le u$ in $\overline{B_1}$.

In particular, $u_k \to u$ uniformly in $\overline{B_1}$.
\end{proposition}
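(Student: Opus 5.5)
Existence for each $(P_k)$ comes from Theorem \ref{thm:existence}, applied with $f_k(x,t):=|x|^\alpha t_+^m+h(x)+\frac1k$. The verification of \eqref{local bdd f} and \eqref{growth condition f} is identical to the one in the proof of Theorem \ref{prop:existence}, since adding the constant $1/k$ changes neither local boundedness nor the limits of $f_k(x,t)/t^3$ as $t\to\pm\infty$. This gives $u_k\in C(\overline{B_1})$ with $u_k=g$ on $\partial B_1$.

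For the monotonicity we use the strict comparison Lemma \ref{lem:boundary-comparison}, which needs no positivity of the weight. Take $u_k$ as the subsolution with $f_1=f_k$, and $u_{k+1}$ as the supersolution with $f_2=f_{k+1}$. Then $f_1-f_2=\frac1k-\frac1{k+1}>0$ for all $(x,t)$, and $f_2$ is nondecreasing in $t$. Since $u_k=u_{k+1}=g$ on $\partial B_1$, we get $u_k\le u_{k+1}$. In the same way, with $f_1=f_k$ and $f_2(x,t)=|x|^\alpha t_+^m+h(x)$, we have $f_1-f_2=\frac1k>0$, so $u_k\le u$ in $\overline{B_1}$.

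For convergence, the sequence is monotone and bounded above by $u$, so $u_k\uparrow u_\infty\le u$ pointwise. To upgrade this to uniform convergence on compact sets we need equicontinuity.
\begin{itemize}
\item A lower bound on $u_k$ comes from comparison with $u_1$, giving $u_1\le u_k\le u$, so $\{u_k\}$ is uniformly bounded.
\item Each $u_k$ solves $\Delta_\infty u_k=F_k$ with $\|F_k\|_{L^\infty}\le \|u\|_\infty^m+\|h\|_\infty+1$. Corollary \ref{thm:scaled-local-Lipschitz} then gives uniform local Lipschitz bounds.
\end{itemize}
By Arzel\`a--Ascoli and monotonicity, the full sequence converges locally uniformly in $B_1$ to $u_\infty$. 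By Lemma \ref{lem:stability}, $u_\infty$ is a viscosity solution of \eqref{eq:main}.

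Next we identify the limit. We have $0\le u_k(0)$? Not a priori; only $u_k(0)\le u(0)=0$ and $u_\infty(0)\le 0$. The nonnegativity of $u_\infty$ follows as in Theorem \ref{prop:existence}: on $\{u_\infty<0\}$ it is infinity-subharmonic-type with $\Delta_\infty u_\infty=h\le0$, so Jensen's comparison gives $u_\infty\ge 0$. Hence $u_\infty(0)=0=u(0)$.

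To invoke Corollary \ref{cor:slice-uniqueness} we need $u_\infty\in C(\overline{B_1})$ with boundary value $g$. This is the main obstacle: interior Lipschitz bounds degenerate at $\partial B_1$. The plan is to squeeze the $u_k$ between two barriers that agree with $g$ on the boundary. From above, $u_k\le u$. From below, $u_k\ge u_1$, and $u_1\in C(\overline{B_1})$ with $u_1=g$ on $\partial B_1$. So for $x_0\in\partial B_1$,
\[
u_1(x)\le u_\infty(x)\le u(x),
\]
and both outer functions tend to $g(x_0)$ as $x\to x_0$. Thus $u_\infty$ extends continuously with $u_\infty=g$ on $\partial B_1$.

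Slice uniqueness then gives $u_\infty=u$. Finally, Dini's theorem applies: an increasing sequence of continuous functions $u_k$ converges pointwise to the continuous function $u$ on the compact set $\overline{B_1}$. Hence the convergence is uniform on $\overline{B_1}$.
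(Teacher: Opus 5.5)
Your proposal is correct and follows the paper's proof essentially step for step. The shared steps are: ordering via the strict comparison Lemma~\ref{lem:boundary-comparison}, uniform bounds plus interior Lipschitz estimates for local uniform convergence, the squeeze $u_1\le u_\infty\le u$ for the boundary values, stability, nonnegativity to get $u_\infty(0)=0$, slice uniqueness, and Dini.

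The only deviation is how you get $u_\infty\ge0$. You rerun the argument of Theorem~\ref{prop:existence} on $u_\infty$ itself, whereas the paper uses the explicit barrier $w_{k,\delta}$, which gives $u_k\ge-\tfrac{3^{4/3}}{4}(\tfrac1k+\delta)^{1/3}$. Both are fine. In your version, $\Delta_\infty u_\infty=h\le0$ makes $u_\infty$ infinity-\emph{super}harmonic on $\{u_\infty<0\}$, not subharmonic. Also, that step must come after you prove boundary continuity.
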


\begin{proof}
We denote the right hand side of $(P_k)$ as
\[
    G_k(x,t):=|x|^\alpha (t_+)^m+h(x)+\frac1k.
\]

Since $0<m<3$, $G_k$ satisfies condition \eqref{local bdd f} and \eqref{growth condition f}, the existence of the solution of the penalized problem $(P_k)$ is guaranteed by Theorem \ref{thm:existence}.

By the comparison principle, it is easy to see that the family $\{u_k\}$ is ordered. Note that
\begin{equation}
\label{eq: penalized-order}
u_1 \leq u_2 \leq \cdots u_k \leq u_{k+1} \leq \cdots \leq u,
\end{equation}

The uniform boundedness of $\{u_k\}$ follows immediately. Indeed, set $M:=\max\left\{\|u_1\|_{L^\infty(B_1)},\|u\|_{L^\infty(B_1)}\right\}$, thus $\|u_k\|_{L^\infty(B_1)} \le M$  for every $k$. Since every bounded monotone sequence converges, we may define the $u_{\infty}$ pointwise for every $x\in\overline{B_1}$ by
\[
u_{\infty}(x):=\lim_{k\to\infty}u_k(x).
\] 

We next show that $u_{\infty}$ coincides with the prescribed solution $u$. It suffices to prove that $u_\infty(0)=u(0)$.

For $\Delta_\infty u_k(x)=|x|^\alpha (u_k)_+^m(x)+h(x)+\frac{1}{k}$, Through Theorem \ref{thm:local-Lipschitz}, we have
\begin{equation*}
\|u_k\|_{C^{0,1}(B_{1/2})}\leq C\left(M+\|G_k(\cdot,u_k(\cdot))\|_{L^{\infty}(B_1)}^{1/3}\right),
\end{equation*}

We claim that $u_{\infty}$ is also a nonnegative viscosity solution to the original equation. We prove the viscosity solution property by the stability theorem and the nonnegativity by giving an auxiliary function.

Here Theorem \ref{lem:stability} guarantees that $u_{\infty}$ is actually a solution of the original Dirichlet problem.

By the uniform $L^\infty$ bound for $\{u_k\}$ and $h$, $\bigl\|G_k(\cdot,u_k(\cdot))\bigr\|_{L^\infty(B_1)}\leq 1\cdot M^m+\|h\|_{L^{\infty}(B_1)}+1:=Q< \infty$.

Take a set $K\Subset B_1$, denote $\delta:=\text{dist}(K,\partial B_1)$. We have $B_{\delta/2}(x)\Subset B_1$ for every $x\in K$.

Hence for all $x,y\in K$, as long as $|x-y|<\delta/4$, we have \[|u_k(x)-u_k(y)|\leq C_K|x-y|,\]
where $C_K:=C_0\left(
\frac{2\|u_k\|_{L^\infty(B_{\delta/2}(x))}}{\delta}
+
\left(\frac{\delta}{2}\right)^{1/3}
\bigl\|G_k(\cdot,u_k(\cdot))\bigr\|_{L^\infty(B_{\delta/2}(x))}^{1/3}
\right)+1$ is independent of both $x$ and $k$, demonstrating that $u_k$ is equicontinuous on $K$.

Moreover, since the inequality holds for every $k$, let $k\to\infty$, we have $u_{\infty}\in C(K)$.

Note that $K$ is a compact set, $u_k$, $u_{\infty}\in C(K)$, $u_k$ converges to $u_{\infty}$ pointwise and $u_k(x)$ is monotonically on $K$ in regard to $k$, therefore Dini's theorem yields $u_k\to u_\infty$ uniformly on $K$.

As $K\Subset B_1$ is arbitrary, $u_k\to u_\infty$ locally uniform in $B_1$.

Since $u_1=u=g$ on $\partial B_1$,
for every $\xi\in\partial B_1$, we have
\[
g(\xi)
=
\lim_{x\to\xi}u_1(x)
\le
\liminf_{x\to\xi}u_\infty(x)
\le
\limsup_{x\to\xi}u_\infty(x)
\le
\lim_{x\to\xi}u(x)
=
g(\xi).
\]

Therefore,
\[
\lim_{x\to\xi}u_\infty(x)=g(\xi),
\qquad \forall\,\xi\in\partial B_1,
\]showing that $u_{\infty}\in C(\overline{B_1})$.

Then we invoke the stability property of viscosity solutions, we denote
$$F_k(x,r,p,X)=\langle Xp,p\rangle-|x|^\alpha(r_+)^m-h(x)-1/k$$ and $$F(x,r,p,X)=\langle Xp,p\rangle-|x|^\alpha(r_+)^m-h(x).$$ 

Hence $F_k$ converges to $F$ locally uniformly since $\sup|F_k-F|=\frac{1}{k}\rightarrow0$ as $k\to \infty$. Since $u_k$ converges to $u_{\infty}$ locally uniformly in $B_1$, thus $u_{\infty}$ satisfies $F=0$. Combined with $u_{\infty}=g$ on $\partial B_1$, it holds that $u_{\infty}$ is also a viscosity solution to the Dirichlet problem.

We next identify $u_{\infty}$ with the origin viscosity solution. The sequence of solutions of $(P_k)$ is actually not all nonnegative, still we can show that $u_{\infty}\ge0$, which applies to Lemma \ref{cor:comparison-sol}.

Consider
\[
w_{k,\delta}(x)
:=
\frac{3^{4/3}}{4}\left(\frac{1}{k}+\delta\right)^{1/3}
\left(|x|^{4/3}-1\right),
\]
here $\delta>0$.

Note that $w_{k,\delta}\leq 0$ in $B_1$, thus $(w_{k,\delta})_+=0$. Also, by direct computation, 
\[
\Delta_{\infty}w_{k,\delta}
=
|x|^{\alpha}(w_{k,\delta})_{+}^m+\frac{1}{k}+\delta
\]
in the viscosity sense.

Also, we have 
\[
\Delta_{\infty}u_k
=
|x|^{\alpha}(u_k)_{+}^m+h(x)+\frac{1}{k}.
\]

Consider the boundary of $\partial B_1$, it holds that
\[
w_{k,\delta}(\xi)=0\leq g(\xi)=u_k(\xi),
\]
for any $\xi$ on $\partial B_1$.

Hence the comparison principle (Lemma \ref{lem:boundary-comparison}) gives that $w_{k,\delta}\leq u_{k}$ inside the whole $\overline{B_1}$ since $h\leq0$.

Thus actually $u_k(x)\ge -\frac{3^{4/3}}{4}(\frac1k+\delta)^{1/3}$ holds. Let $\delta\to0$ and $k\to \infty$, it yields that 
\[
u_{\infty}\ge 0.
\]

Since $u(0)=0$ and $w_{k,\delta}\leq u_k\leq u_{\infty}\leq u$ hold, $u_\infty(0)=0$.

We have therefore reached the pleasant conclusion that the signed penalized approximations $u_k$ converge uniformly to the prescribed solution $u$ using the slice uniqueness property, which is
\[
u_{\infty}=u.
\]

Since $u_k\uparrow u$ pointwise on the compact set $\overline{B_1}$, and $u_k$, $u\in C(\overline{B_1})$, Dini's theorem yields $u_k\to u$ uniformly on $\overline{B_1}$.

Thus we complete the proof.
\end{proof}

Next we enter the proof of the non-degeneracy estimate. 

\begin{proof}[\textbf{Proof I} of Theorem \ref{thm:non-de-homo}]
Take $h\equiv0$. With the identification \ref{prop:penalized-approximation}, we are free to return our attention to the original viscosity solution while still enjoying the convenience of the sequence of $\{u_k\}$ to do the following approximation.

Then we have a sequence $\{u_k\}$ that uniformly converges to $u$. Similar to a standard idea in free boundary analysis, see \cite{Figalli2018}, we take a sequence $\{x_\ell\}$ in the positive phase to approximate the free boundary point.

Fixing $r\in(0,1)$, we have the following argument.

Since $0\in\partial\{u>0\}$, we may choose a sequence $x_\ell\in\{u>0\}\cap B_r$ such that $x_\ell\to0$. For every fixed $\ell$, we have $u_k(x_\ell)\to u(x_\ell)$ as $k\to\infty$. 

For every $\varepsilon>0$, there exists $K_\ell\in\mathbb{N}$ such that $|u_k(x_\ell)-u(x_\ell)|<\varepsilon$ for every $k\geq K_\ell$.
Since $x_\ell\in\{u>0\}$, we have $u(x_\ell)>0$. Taking $\varepsilon=\frac{u(x_\ell)}{2}$, we obtain
$u_k(x_\ell)>\frac{1}{2}u(x_\ell)>0$
for every $k\geq K_\ell$.

We may now choose $\{k_{\ell}\}$ inductively by taking
\[
k_1\ge\max\{1,K_1\},
\]
and
\[
k_\ell
\ge
\max\{\ell,K_\ell,k_{\ell-1}+1\},
\qquad \ell\ge2.
\]
Then we can guarantee that $k_\ell\to\infty$.

By direct computation, we know that the function
\[
\Phi_\ell(x)
:=
\left(
\frac{(3-m)^4}
{(4+\alpha)^3(1+\alpha+m)}
\right)^{\frac1{3-m}}
\bigl(|x|-|x_\ell|\bigr)_+^\beta.
\]
satisfies
\begin{equation*}
\begin{aligned}
\Delta_\infty\Phi_\ell
&=
\bigl(|x|-|x_\ell|\bigr)_+^\alpha
\Phi_\ell^m\\
&\le
|x|^\alpha\Phi_\ell^m\\
&<
|x|^\alpha(\Phi_\ell)_+^m+\frac1{k_\ell}.
\end{aligned} 
\end{equation*}

Also, it holds that $$\Delta_\infty u_{k_{\ell}}(x)=|x|^\alpha (u_{k_{\ell}})_+^m(x)+\frac{1}{k_{\ell}}.$$

We now claim that there exists a point $z_\ell\in\partial B_r(0)$ such that $u_{k_\ell}(z_{\ell})\ge\Phi_{\ell}(z_{\ell})$. If this were not the case, we argue by contradiction. By applying the comparison principle (Lemma \ref{lem:boundary-comparison}), we would conclude that  $u_{k_\ell}\le\Phi_\ell$ in the entire ball $B_r(0)$. However according to the construction above,
\[
0
<
u_{k_\ell}(x_\ell)
\le
\Phi_\ell(x_\ell)
=
0,
\]
which gives a contradiction.

Therefore, we yield the following estimate
\[
\sup_{\partial B_r(0)}u(x)
\ge
\sup_{\partial B_r(0)}u_{k_\ell}
>
\left(
\frac{(3-m)^4}
{(4+\alpha)^3(1+\alpha+m)}
\right)^{\frac1{3-m}}(r-|x_\ell|)^\beta.
\]

Finally, let $\ell\to\infty$, and $x_{\ell}\to0$, we conclude that 
\[
\sup_{\partial B_r(0)}u(x)
\ge
\left(
\frac{(3-m)^4}
{(4+\alpha)^3(1+\alpha+m)}
\right)^{\frac1{3-m}}r^{\frac{4+\alpha}{3-m}}.
\]
Therefore we finish the proof.
\end{proof}

The penalization procedure is not intrinsic for the non-degeneracy estimate. Nevertheless, we retain the preceding argument as it has more connections with the penalization and limiting solution framework used in the existing literature.

In fact, the argument can be carried out directly for the original solution by using the degeneracy set comparison principle Theorem \ref{thm:de-set-comparison} we have already developed, without passing through the penalized family and can be extended to the inhomogeneous case more clearly.

The notations are the same as the proof above.

\begin{proof}[\textbf{Proof II} of Theorem \ref{thm:non-de-homo}]
As shown above,
\[
\Delta_\infty\Phi_{\ell}
\leq
|x|^\alpha(\Phi_{\ell})_+^m
\qquad\text{in }B_r
\]
in the viscosity sense.

Note that
\[
u(0)=0=\Phi_{\ell}(0),
\]
thus $u$ and $\Phi_{\ell}$ agree on the degeneracy set of the weight function $F$.

Now, for any $r\in(0,1)$, we claim that there must exist a point $z_r\in\partial B_r(0)$ such that $u(z_r)\ge \Phi_{\ell}(z_r)$. Otherwise, by degeneracy set comparison principle (Theorem \ref{thm:de-set-comparison}), $\Phi_{\ell}\ge u$ in the whole region $B_r(0)$. However, $0=\Phi_{\ell}(x_{\ell})<u(x_{\ell})$. 

By similar process, it holds that
\[
\sup_{\partial B_r(0)}u(x)
\ge
\left(
\frac{(3-m)^4}
{(4+\alpha)^3(1+\alpha+m)}
\right)^{\frac1{3-m}}r^{\frac{4+\alpha}{3-m}}.
\]

Thus, although the penalized family $\{u_k\}$ provides a convenient approximation framework, the non-degeneracy estimate itself can also be obtained directly from the comparison argument on the degeneracy set.
\end{proof}

\subsection{The inhomogeneous case}

After establishing the sharp non-degeneracy estimate for the homogeneous case, we now focus on the case where a source term $h$ is considered. Inspired by the framework and techniques in Theorem 3.2 of \cite{LinLiu2022}, We first prove a weaker form of non-degeneracy of the viscosity solutions at positive-phase point where the source term vanishes. Then we try to approximate the very point $0$ using the powerful degeneracy set comparison principle Theorem \ref{thm:de-set-comparison} we have developed.

\begin{proof}[Proof of Theorem \ref{thm:non-de-inhomo}]
We divided the proof into two steps.

\textbf{Step 1.} First, since $0\in \overline{\{u>0\}\cap\{h=0\}}$, there exists $x_k\in\{u>0\}\cap\{h=0\}$ such that $x_k\to0$.

Consider $B_{\theta|x_k|}(x_k)\Subset B_1$. For any $x\in B_{\theta|x_k|}(x_k)$, define
\begin{equation}
\label{non-de-h-radical}
V_k(x)
:=
s|x_k|^{\frac{\alpha}{3-m}}
|x-x_k|^{\frac4{3-m}}.
\end{equation}

Direct computation yields
\begin{equation*}
\begin{aligned}
\Delta_\infty V_k-|x|^\alpha V_k^m
\le&
-s^m
\left[
(1-\theta)^\alpha
-
\frac{64(1+m)}{(3-m)^4}s^{3-m}
\right]
\cdot
|x_k|^{\frac{3\alpha}{3-m}}
|x-x_k|^{\frac{4m}{3-m}}
\\
\le&\ h(x)
\end{aligned}    
\end{equation*}
in $B_{\theta|x_k|}(x_k)\setminus\{x_k\}$.

Easy to verify the supersolution inequality holds at $x_k$ in the viscosity sense.

We claim that there exists $z_k\in\partial B_{\theta|x_k|}(x_k)$ such that $u(z_k)>V_k(z_k)$. The argument here is similar to Theorem 3.2 in \cite{LinLiu2022}, so we omit it here.

Hence we have 
\begin{equation}
\label{step1-de-h-1}
u(z_k)>V_k(z_k)=
s\theta^{\frac4{3-m}}
|x_k|^{\frac{4+\alpha}{3-m}}.
\end{equation}

Also, we have $|z_k|\le (1+\theta)|x_k|$, thus 
\begin{equation*}
u(z_k)
>
\frac{
s\theta^{\frac4{3-m}}
}{
(1+\theta)^{\frac{4+\alpha}{3-m}}
}
|z_k|^{\frac{4+\alpha}{3-m}}.
\end{equation*}

\textbf{Step 2.} Set
\[
C_{\alpha,m}:=\left(\frac{(3-m)^4}
{(4+\alpha)^3(1+\alpha+m)}
\right)^{\frac1{3-m}}.
\]

For fixed $\eta\in(0,1)$, we define 
\begin{equation*}
\Phi_k(x)
=
\frac{
s\theta^{\frac4{3-m}}
}{
(1+\theta)^{\frac{4+\alpha}{3-m}}
}
|z_k|^{\frac{4+\alpha}{3-m}}
+
\eta C_{\alpha,m}
\bigl(|x|-|z_k|\bigr)_+^{\frac{4+\alpha}{3-m}}.
\end{equation*}

Denote $b:=\dfrac{
s\theta^{\frac4{3-m}}
}{
(1+\theta)^{\frac{4+\alpha}{3-m}}
}$. Notice immediately that
\begin{equation*}
\Phi_k(z_k)
=
b|z_k|^{\frac{4+\alpha}{3-m}}
<
u(z_k).
\end{equation*}

We have $\Phi_k$ as a supersolution. In fact, for $|x|\le |z_k|$,
\begin{equation*}
\begin{aligned}
\Delta_\infty\Phi_k
-|x|^\alpha\Phi_k^m
&=
-\left(
\frac{
s\theta^{\frac4{3-m}}
}{
(1+\theta)^{\frac{4+\alpha}{3-m}}
}
\right)^m
|x|^\alpha
|z_k|^{\frac{m(4+\alpha)}{3-m}}
\\
&\le
-\left(
\frac{
s\theta^{\frac4{3-m}}
}{
(1+\theta)^{\frac{4+\alpha}{3-m}}
}
\right)^m
|x|^{\frac{3\alpha+4m}{3-m}}.
\end{aligned}
\end{equation*}

And for $|x|>|z_k|$, denote $t:=\frac{|z_k|}{|x|}\in(0,1)$, and direct computation yields
\begin{equation*}
\begin{aligned}
&\Delta_\infty\Phi_k
-|x|^\alpha\Phi_k^m
\\
={}&
-|x|^{\frac{3\alpha+4m}{3-m}}
\Bigg\{
\left[
bt^{\frac{4+\alpha}{3-m}}
+
\eta C_{\alpha,m}
(1-t)^{\frac{4+\alpha}{3-m}}
\right]^m
-
\eta^3C_{\alpha,m}^m
(1-t)^{\frac{3\alpha+4m}{3-m}}
\Bigg\}.
\end{aligned}
\end{equation*}

Set $G_\eta(t) 
:=
\left[
bt^{\frac{4+\alpha}{3-m}}
+
\eta C_{\alpha,m}
(1-t)^{\frac{4+\alpha}{3-m}}
\right]^m
-
\eta^3C_{\alpha,m}^m
(1-t)^{\frac{3\alpha+4m}{3-m}}.    
$

Then for $0\le t<1$, 
\begin{equation*}
G_\eta
\ge
\eta^mC_{\alpha,m}^m
(1-t)^{\frac{m(4+\alpha)}{3-m}}
\left[
1-\eta^{3-m}(1-t)^\alpha
\right]
>0.
\end{equation*}

When $t=1$, $G_\eta(1)=b^m>0$. Hence we can select a constant $\mu_0(\eta):=\min_{0\le t \le1}G_\eta(t)>0$, such that
\begin{equation*}
\Delta_\infty\Phi_k
-|x|^\alpha\Phi_k^m
\le
-G_\eta(t)
|x|^{\frac{3\alpha+4m}{3-m}}
\le
-\mu_0(\eta)
|x|^{\frac{3\alpha+4m}{3-m}}.
\end{equation*}

According to the condition \eqref{cond:non-de-h2}, $\Phi_k$ is a supersolution, that is
\begin{equation*}
\Delta_\infty\Phi_k-|x|^\alpha\Phi_k^m
\le h(x)
\qquad\text{in }B_{r_\eta}.
\end{equation*}

For any given $r\in(0,r_{\eta})$, since $z_k\to0$, we can choose sufficiently large $k$ and $|z_k|<r$.

Again, we claim that there exists $z_r$ on $\partial B_r$ such that $u(z_r)>\Phi_k(z_r)$.

We argue that by contradiction. Since it holds that 
$u(0)=0
<
b|z_k|^{\frac{4+\alpha}{3-m}}
=
\Phi_k(0)$ 
on the degeneracy set.

By degeneracy set comparison principle, if $u\leq\Phi_k$ on $\partial B_r$, then $u\leq\Phi_k$ in the entire $B_r$, which however leads to a contradiction since $u(z_k)>\Phi_k(z_k)$.

Hence we have
\begin{equation*}
\sup_{\partial B_{r}}u
\ge
b|z_k|^{\frac{4+\alpha}{3-m}}
+
\eta \left(
\frac{(3-m)^4}
{(4+\alpha)^3(1+\alpha+m)}
\right)^{\frac1{3-m}}
(r-|z_k|)^{\frac{4+\alpha}{3-m}}.
\end{equation*}

Letting $k\to\infty$, it holds that
\begin{equation*}
\sup_{\partial B_{r}}u
\ge
~\eta \left(
\frac{(3-m)^4}
{(4+\alpha)^3(1+\alpha+m)}
\right)^{\frac1{3-m}}
\cdot
~r^{\frac{4+\alpha}{3-m}}
\end{equation*}
for $r\in(0,r_\eta)$.

Consequently since $\eta\in(0,1)$ is arbitrary, letting $\eta\uparrow1$ yields
\[
\liminf_{r\downarrow0}
\frac{\sup_{\partial B_r}u}
{r^{\frac{4+\alpha}{3-m}}}
\ge
C_{\alpha,m}.
\]
\end{proof}

\begin{remark}
As we can see, the local comparison argument around positive-phase points used in \textbf{Step 1} can actually be modified to the proof of the homogeneous case. In that case, the same growth rate will still be obtained, although the resulting lower bound constant is not sharp. 

To be more specific, for $x_0\in\{u>0\}$, working in the ball $B_{\theta|x_0|}(x_0)$, with $0<\theta<1$, then the weight function has a strictly positive inf and we can yield  
$|x|^\alpha
\ge
(1-\theta)^\alpha|x_0|^\alpha$.

Thus for 
$V(x)
=
s|x_0|^{\frac{\alpha}{3-m}}
|x-x_0|^{\frac4{3-m}}$, we can have
\[
c_{\mathrm{loc}}(\theta)
=
\left[
\frac{(3-m)^4}{64(1+m)}
\frac{\theta^4(1-\theta)^\alpha}
{(1+\theta)^{4+\alpha}}
\right]^{\frac1{3-m}}<C_{\alpha,m},
\]
thus the constant $C_{\alpha,m}$ is not an artifact in the comparison procedure. It is a reflection of the structure of the equation. In the homogeneous case, this coefficient in the non-degeneracy estimate, in the case of a source term is present, however, we should do more work to recover the sharp constant estimate.

In fact, if we don't care about the constant, \textbf{Step 2} can be substantially simplified. Indeed, after the argument provides a sequence $z_k$ satisfying $u(z_k)\ge c_0|z_k|^{\frac{4+\alpha}{3-m}}$ for some $c_0>0$, we may consider a simpler function
\begin{equation*}
\Psi(x)=c|x|^{\frac{4+\alpha}{3-m}}
\end{equation*}
where $0<c<\min\{c_0,C_{\alpha,m}\}$.

A direct computation gives that
\begin{equation*}
\Delta_\infty\Psi-|x|^\alpha\Psi^m
=
-c^m
\left[
1-\left(\frac{c}{C_{\alpha,m}}\right)^{3-m}
\right]
|x|^{\frac{3\alpha+4m}{3-m}}.
\end{equation*}

Hence, if $h(x)\ge
-\mu |x|^{\frac{3\alpha+4m}{3-m}}$ with $\mu>0$ sufficiently small, then $\Psi$ is a supersolution. The following argument is similar.

 Therefore, if only a quantitative non-degeneracy estimate is required, the auxiliary function we demonstrated just now provides a considerably shorter argument. The more exact function used in \textbf{Step 2}, however, provides a method to retain the asymptotically sharp homogeneous coefficient and construct a bridge between the homogeneous and inhomogeneous cases.

\end{remark}

\section{Final comments and possible generalizations}

In the final section, we plan to briefly discuss several questions and possible generalizations suggested by the conclusions obtained above. In the future work, we may focus on the global uniqueness of the Dirichlet problem with degenerate weight function, the interaction between the free boundary and the degeneracy set of the weight function, and the geometry of the corresponding free boundary and dead core. Also, we may extend the present analysis from degenerate weight functions to singular ones, as in broader Hardy-H\'enon weight.

The first issue we are interested in is the global uniqueness of the Dirichlet problem. The degeneracy set comparison principle this article developed demonstrates that once the value of $u$ on $F$ is given, the solutions with the same boundary datum are uniquely determined. However, the global uniqueness has not been solved yet. From the comparison principle perspective, $F$ behaves in some sense as an additional interior boundary. Combined with the slice uniqueness, a natural thought is that, in analogy with classical removability questions for harmonic functions, can the value of $u$ inside $F$ actually be determined by the outer boundary values? But we also need to point out that it is different, since $F$ here is intrinsically generated by the vanishing of the weight.

A second direction is to replace the degeneracy set $F=\{0\}$ here by a general closed set. The essence of the object we focus in this paper is actually $F\cap \partial \{u>0\}$. Distance type weights have already appeared in the existing literature, see for instance \cite{BezerraDaSilvaNascimentoSa2024}. Actually, the free boundary can be viewed as $\partial\{u>0\}
=\bigl(\partial\{u>0\}\setminus F\bigr) \cup \bigl(F\cap\partial\{u>0\}\bigr)$. Inspired by the conclusions, we note that even on the free boundary, different points can have different growth rates, and the classification may be $\partial\{u>0\}\setminus F$ and $F\cap \partial \{u>0\}$. Moreover, like the existing literature, since we already get sharp growth regularity and non-degeneracy of the solution near the free boundary intersects with the degeneracy set, we may have similar corollaries and propositions. 

Another related problem is the geometry of the dead core set. Since this article requires the origin falls on the free boundary, the geometric structure here is quite different from that studied in \cite{AraujoLeitaoTeixeira2016}, featuring the symmetric solution structure. In particular, the possible occurrence of non-radial or off-centered dead cores and their interaction with $F$ seem to be natural questions beyond existing results.

Last but not least, another extension of our problem is to return to a broader Hardy-H\'enon weight function. \cite{BezerraDaSilvaNascimentoSa2024} and \cite{WangJiang2026} allow both degenerate and singular case. However, when $\alpha<0$, The origin is a singular point of the weight function rather than a degenerate point. Whether the tools developed here still apply in that case needs extra argument, which is another natural continuation.

\bibliographystyle{amsplain}
\bibliography{refs}

\end{document}